\definecolor{darkgreen}{RGB}{0,100,0}
\newcommand*{\df}[1]{\textit{#1}}
\newcommand{\textenum}[2]{$#1$, $\ldots$, $#2$}
\DeclareMathOperator{\IV}{Int}
\newcommand{\Int}[1]{\ensuremath{\IV(#1)}\xspace}
\newcommand{\IntA}{\ensuremath{\IV(A,\Matrn{D})}\xspace}
\DeclareMathOperator{\IntIm}{Int-Im}
\newcommand{\IntImA}{\ensuremath{\IntIm(A,\Matrn{D}) }\xspace}
\newcommand*{\FacRing}[3]{\ensuremath{\nicefrac{#1}{#2^{#3}{#1}}}\xspace}
\newcommand{\Res}[2]{\ensuremath{[{#1}]_{#2}}\xspace}
\newcommand{\Resp}[1]{\ensuremath{[{#1}]_{p}}\xspace}
\newcommand*{\cp}[1]{\ensuremath{\widehat{#1}}\xspace}
\newcommand{\Rl}{\ensuremath{R_{\ell}}\xspace}
\newcommand{\Loc}[2]{\ensuremath{{#1}_{(#2)}}\xspace}
\newcommand{\ModuleVar}[2]{\ensuremath{{\left\langle #2 \right\rangle}_{#1}}\xspace}
\newcommand{\basis}[2]{\ensuremath{\mathbf{#1}_{#2}}\xspace}
\newcommand*{\Nl}{\ensuremath{\mathbf{0}}\xspace}
\DeclareMathOperator{\lc}{\mathsf{lc}}
\DeclareMathOperator{\val}{\mathsf{w}}
\renewcommand{\mod}{\ensuremath{\;\operatorname{mod}\;}}
\theoremstyle{plain}
\newtheorem{Thm}{Theorem}[section]
\newtheorem{Lem}[Thm]{Lemma}
\newtheorem{Pro}[Thm]{Proposition}
\newtheorem{Cor}[Thm]{Corollary}
\theoremstyle{definition}
\newtheorem{Def}[Thm]{Definition}
\newtheorem{NotConv}[Thm]{Notation and Conventions}
\newtheorem{Exa}[Thm]{Example}
\newtheorem{Rem}[Thm]{Remark}
\theoremstyle{remark}
\newtheorem*{Claim}{Claim} 
\newcommand*{\N}{\ensuremath{\mathbb{N}}\xspace}
\newcommand*{\Z}{\ensuremath{\mathbb{Z}}\xspace}
\newcommand*{\Q}{\ensuremath{\mathbb{Q}}\xspace}
\newcommand{\Primes}{\ensuremath{\mathbb{P}}\xspace}
\newcommand{\NullI}[1][]{\ensuremath{\mathsf{N}^{#1}}\xspace}
\newcommand{\NullIovD}[2][]{\ensuremath{\mathsf{N}^{#1}_{#2}}\xspace}
\newcommand{\SIndexSet}[2]{\ensuremath{\mathcal{I}_{#2}^{\star}}\xspace}
\newcommand{\IndexSet}[2]{\ensuremath{\mathcal{I}_{#2}}\xspace}
\newcommand{\pdeg}{\ensuremath{\mathsf{d}_p}\xspace}
\DeclareMathOperator{\successor}{succ}
\newcommand{\nf}[1]{\ensuremath{\successor(#1)}\xspace}
\DeclareMathOperator{\Matrices}{M}
\DeclareMathOperator{\GenLin}{GL}
\newcommand{\Matr}[2]{\ensuremath{\Matrices_{#1}(#2)}\xspace}
\newcommand{\Matrn}[1]{\ensuremath{\Matrices_n(#1)}\xspace}
\newcommand{\GLn}[1]{\ensuremath{\GenLin_n(#1)}\xspace}
\DeclareMathOperator{\companion}{\mathcal{C}}
\newcommand{\Comp}[1]{\ensuremath{\companion_{#1}}\xspace}
\DeclareMathOperator{\diag}{diag}
\numberwithin{equation}{section}
\begin{document}
\title{Null ideals of matrices over residue class rings of principal ideal domains}
\author{
Roswitha Rissner\footnotemark[1]
}
\maketitle
\def\thefootnote{\fnsymbol{footnote}}
\footnotetext[1]{Graz University of Technology, Department of Analysis and Computational Number Theory, Steyrergasse 30, 8045 Graz,
Austria. \texttt{rissner@math.tugraz.at}. Supported by Austrian Science Fund (FWF): P23245-N18  }

\begin{abstract}%
\noindent Given a square matrix $A$ with entries in a commutative ring $S$,
the ideal of $S[X]$ consisting of polynomials $f$ with $f(A) =0$
is called the null ideal of $A$. 
Very little is known about null ideals of matrices over general commutative rings. 
First, we determine a certain generating set of the null ideal of a matrix in case $S = \FacRing{D}{d}{}$ is the residue class ring of 
a principal ideal domain $D$ modulo $d\in D$. After that we discuss two applications.
We compute a decomposition of the $S$-module $S[A]$ into cyclic $S$-modules and explain the strong 
relationship between this decomposition and the determined generating set of the null ideal of $A$. 
And finally, we give a rather explicit description of the ring \IntA of all integer-valued polynomials on $A$.

\smallskip
\noindent \textbf{Keywords.} null ideal, matrix, minimal polynomial, integer-valued polynomials

\smallskip
\noindent \textbf{2010 Math. Subj. Class.} 11C08, 11C20, 13F20, 15A15, 15B33, 15B36 

\end{abstract}

\section{Introduction}

Matrices with entries in commutative rings arise in numerous contexts, both in pure and applied mathematics.  
However, many of the well-known results of classical linear algebra do not hold in this general setting. This is the case even if the underlying ring is a domain (but not a field). For a general introduction to matrix theory over commutative rings we refer to the textbook of Brown~\cite{Brown1993}.

The purpose of this paper is to provide a better understanding of null ideals of square matrices over residue class rings of principal ideal domains. 
\begin{Def}
Let $S$ be a commutative ring, $A\in \Matrn{S}$ an $n$$\times$$n$-square matrix $A$ over $S$. 
The \df{null ideal} $\NullI[S](A)$ of $A$ (over $S$) is the set of all polynomials which annihilate $A$, that is, 
\begin{align*}
 \NullI[S](A) =  \{\,f \in S[X] \mid f(A) = 0\,\}.
\end{align*}
We often write $\NullI(A)$ instead of $\NullI[S](A)$ if the underlying ring is clear from the context.
\end{Def}
In case $S$ is a field, it is well-known that the null ideal of $A$ is generated by a uniquely determined monic polynomial, the so-called \df{minimal polynomial} $\mu_A$ of $A$.  Further, it is known  that if $S$ is a domain, then the null ideal of every square matrix is principal (generated by $\mu_A$) if and only if $S$ is integrally closed, (Brown~\cite{Brown98}, Frisch~\cite{Frisch2004}). 
However, little is known about the null ideal of a matrix with entries in a commutative ring. 
The well-known Cayley-Hamilton Theorem states that every square matrix over a commutative ring satisfies its own characteristic equation (cf.~\cite[Theorem~XIV.3.1]{Lang2002}). Therefore there always exists a monic polynomial in $S[X]$ of minimal degree which annihilates the matrix. 
% ------------------------------------------------------------------------------------------------------------------------------
\begin{Def}\label{def:minpoly}
 Let $A\in\Matrn{S}$ be a square matrix over a commutative ring $S$. If $f\in S[X]$ is a monic  polynomial with $f(A) = 0$ and there exists no monic polynomial in $S[X]$ of smaller degree with this property, then we call $f$ a \df{minimal polynomial} of $A$ over $S$. 
\end{Def}
% ------------------------------------------------------------------------------------------------------------------------------
% 
Note that, in case $S$ is a field, the definition above is consistent with the classical definition of the (uniquely determined) minimal polynomial of a square matrix. However in general, if $S$ is not a field, a minimal polynomial of a matrix over $S$ is not uniquely determined, although its degree is. It is known that if $S$ is a domain, then the null ideal of $A$ is principal if and only if $A$ has a uniquely determined minimal polynomial over $S$, which is in turn equivalent to  the (uniquely determined) minimal polynomial $\mu_A$ of $A$ over the quotient field of $S$ being in $S[X]$. 

Brown discusses conditions for the null ideal to be principal over a general commutative ring $R$ (with identity). 
In~\cite{Brown2005}, he gives sufficient conditions on certain $R[X]$-submodules of the null ideal for the null ideal to be principal. 
There is also earlier work of Brown investigating the relationship of the null ideals of certain pairs of square matrices over a commutative ring (which he refers to as spanning rank partners), see~\cite{Brown98}, \cite{Brown99}. 

A better understanding of null ideals of matrices over residue class rings of domains has applications in the theory of integer-valued polynomials on matrix rings. Let $D$ be a domain with quotient field $K$, and let $A\in \Matrn{D}$. 
For a polynomial $f\in K[X]$, the image $f(A)$ of $A$ under $f$ is a matrix with entries in $K$. There are two immediate questions in this context:
For which $f\in K[X]$ does $f(A)\in\Matrn{D}$ hold? And what are the images of $A$ under these polynomials?
We set 
\begin{align*}
 \IntA = \{\,f\in K[X]\mid f(A)\in\Matrn{D}\,\}
\end{align*}
the ring of integer-valued polynomials on $A$, and we denote by  
\begin{align*}
 \IntImA = \{\,f(A) \mid f \in \IntA\,\}
\end{align*}
the ring of images of $A$ under integer-valued polynomials of $A$. 
\IntA is an overring of the ring of integer-valued polynomials on the $D$-algebra \Matrn{D}, that is,
\begin{align*}
\Int{\Matrn{D}} = \{\,f\in K[X]\mid f(\Matrn{D}) \subseteq \Matrn{D}\,\}.
\end{align*}
The ring \Int{\Matrn{D}} and other generalizations of integer-valued polynomial rings are subject of recent research, see \cite{Evrard2013}, \cite{Frisch2010}, \cite{Frisch2013},   \cite{LoperWerner2012}, \cite{Peruginelli2014} and \cite{PeruginelliWerner2014}.

The connection between integer-valued polynomials on a matrix and null ideals of matrices is the following: Let $f\in K[X]$, then there exist $g\in D[X]$ and $d\in D$ such that $f = g/d$. The following assertion holds:
\begin{align*}
 \forall\, d\in D\setminus \{0\}\;\;\forall\, g\in D[X]\;:\left( \frac{g}{d} \in \IntA   \; \Longleftrightarrow  \; g(A) \equiv 0 \mod d\Matrn{D} \right)
\end{align*}
which is the case if and only if the residue class of $g$ is in the null ideal of $A$ over the residue class ring $\FacRing{D}{d}{}$.

In this paper, we investigate the null ideal of a square matrix $A$ over the residue class ring \FacRing{D}{d}{} of a principal ideal domain $D$ modulo $d\in D$. In Section~\ref{sec:GeneratorsNullIdeal} we provide a description of a specific set of generators of the null ideal of a matrix with entries in \FacRing{D}{d}{}. 
With this goal in mind, we generalize the notion of the null ideal at the beginning of the section. 
Instead of looking only at the ideal of polynomials which map $A$ to the zero ideal, we are also interested in those polynomials which map $A$ to the ideal $d\Matrn{D}$, cf.~Definition~\ref{def:Jideal}. This point of view has the advantage that it allows us to work over domains instead of residue class rings (which, in general, have zero-divisors). Further, it turns out that it suffices to consider the special case when $d=p^{\ell}$ is a prime power ($\ell\in\N$ and $p\in D$ a prime element). The main result of this section is Theorem~\ref{thm:MinGenNullI} which describes a specific set of generators of the null ideal of a matrix over $\FacRing{D}{p}{\ell}$. 
However, this description is theoretic; so far, we do not know how to determine them algorithmically in general. It is possible to compute these generators explicitly in case of diagonal matrices. We present this approach at the end of Section~\ref{sec:GeneratorsNullIdeal}. 

The theoretical results in Section~\ref{sec:GeneratorsNullIdeal} allow us to present two applications.
In Section~\ref{sec:modulestructure} we analyze the \FacRing{D}{p}{\ell}-module structure of $\FacRing{D}{p}{\ell}[A]$ for $A \in \Matrn{\FacRing{D}{p}{\ell}}$. 
As a finitely generated module over a principal ideal ring, $\FacRing{D}{p}{\ell}[A]$ decomposes into a direct sum of cyclic submodules with uniquely determined invariant factors,  according to~\cite[Theorem~15.33]{Brown1993}. We describe this decomposition explicitly and find a strong relationship to the generating set of $\NullI[\FacRing{D}{p}{\ell}](A)$ from Section~\ref{sec:GeneratorsNullIdeal}. This allows us to find certain invariant properties of this generating set.

In the last section we apply the knowledge about the null ideal gained in Section~\ref{sec:GeneratorsNullIdeal} to integer-valued polynomials. We give an explicit description of the ring \IntA using the generating set of the null ideal of $A$ modulo finitely many prime powers $p^{\ell}$. Once this description is given, the ring \IntImA of images of 
$A$ under integer-valued polynomials is easily determined.

\section{Generators of the null ideal}
\label{sec:GeneratorsNullIdeal}

As already mentioned in the introduction, the goal of this section is to compute a generating set of the null ideal of a square matrix over residue class rings of  a principal ideal domain $D$. However, as it is much more convenient to work over domains instead of residue class rings (which, in general, contain zero-divisors) it turns out to be useful to generalize the notion of the null ideal of a matrix. Instead of investigating only ideals of polynomials which map a given matrix to the zero ideal, we are also interested in polynomials which map the matrix to the ideal $J\Matrn{D}$ where $J$ is an ideal of $D$. Although the results in this paper are restricted to matrices over principal ideal domains and their residue class rings, the following definitions make sense in much broader generality. Therefore, up to and including Remark~\ref{rem:01mp}, we allow the underlying ring to be a general commutative ring.

\begin{Def}\label{def:Jideal}
 Let $S$ be a commutative ring, $J$ an ideal of $S$ and $A\in \Matrn{S}$ a square matrix. We call
 \begin{align*}
  \NullIovD[S]{J}(A) = \{\,f\in S[X] \mid f(A) \in J\Matrn{S}\,\}
 \end{align*}
 the \df{$J$-ideal of $A$} (over $S$).  Further, we say $f$ is a \df{$J$-minimal polynomial of $A$} (over $S$), 
 if $f$ is a monic polynomial in $\NullIovD[S]{J}(A)$ and $\deg(f) \leq \deg(g)$ for all monic polynomials $g\in  \NullIovD[S]{J}(A)$.
If the underlying ring is clear from the context, we often omit the superscript and  write $\NullIovD{J}(A)$ instead of $\NullIovD[S]{J}(A)$.
\end{Def}

\begin{Rem}\label{rem:nullidealnewdef}
  With this definition, the null ideal $\NullI[S](A)$ of $A$ is just the \Nl-ideal $\NullIovD[S]{\Nl}(A)$ (that is if $J=\Nl$ is the trivial ideal). Further, the \Nl-minimal polynomials of a matrix $A$ are exactly the minimal polynomials of $A$ over $S$, cf.~Definition~\ref{def:minpoly}. We often use the more classical notation $\NullI[S](A)$ (and say minimal polynomial instead of \Nl-minimal polynomial) as it is less technical.
\end{Rem}

For the remainder of this paper, let the following notation and conventions hold.
\begin{NotConv}
 Let $S$ be a commutative ring, $J$ an ideal of $S$ and $A\in \Matrn{S}$. We identify the isomorphic rings $\Matrn{\nicefrac{S}{J}} = \nicefrac{\Matrn{S}}{J\Matrn{S}}$ and $\nicefrac{S}{J}[X] = \nicefrac{S[X]}{JS[X]}$ and write \Res{\,.\,}{J} to denote residue classes modulo $J$. 
\end{NotConv}

\begin{Rem}
 The null ideal $\NullI[\nicefrac{S}{J}](\Res{A}{J})$ of the residue class $\Res{A}{J} \in \Matrn{\nicefrac{S}{J}}$ of $A$ modulo $J$ is the image of the $J$-ideal $\NullIovD[S]{J}(A)$ of $A$ under the projection modulo $J$, that is, 
\begin{align*}
 \NullI[\nicefrac{S}{J}](\Res{A}{J}) = \NullIovD[\nicefrac{S}{J}]{\Nl}(\Res{A}{J})=\{\,\Res{f}{J}\in \nicefrac{S}{J}[X] \mid f\in \NullIovD[S]{J}(A)\,\}.
\end{align*}
\end{Rem}

\begin{Rem}\label{rem:defminpoly_ovD}
 Whether a monic polynomial $f\in S[X]$ is a $J$-minimal polynomial of $A$ depends only on the residue class of $A$ modulo~$J$.
 If $J\neq S$ is a proper ideal, then a monic polynomial $f\in S[X]$ is a $J$-minimal polynomial if and only if its residue class $\Res{f}{J}\in \nicefrac{S}{J}[X]$ is a $\Nl$-minimal polynomial of $\Res{A}{J}$ over $\nicefrac{S}{J}$. (In case $J=S$, one would have to think about the meaning of  ``monic'' polynomial over the null ring to state a similar result. As we do not want to consider the zero polynomial to be monic, we exclude this case.)
 Further, let $I$ be an ideal of $S$ such that $I\subseteq J$. Then $\nicefrac{S}{J} \simeq \nicefrac{(S/I)}{(J/I)}$. Therefore,
 $f$ is a $J$-minimal polynomial of $A$ over $S$ if and only if $\Res{f}{I} \in \nicefrac{S}{I}[X]$ is a $\nicefrac{J}{I}$-minimal polynomial of $\Res{A}{I}$ over $\nicefrac{S}{I}$. % (where \Res{\,.\,}{I} denotes the residue class modulo $J$). 
\end{Rem}

\begin{Rem}\label{rem:01mp} 
The $S$-ideal $\NullIovD[S]{S}(A)$ of every square matrix $A$ over $S$ is just the whole ring $S[X]$ (that is, if $J=(1)=S$ is the unit ideal). It is therefore generated by the constant polynomial $1$. Hence the constant $1$ is the (uniquely determined) $S$-minimal polynomial of every square matrix $A$ over $S$.
\end{Rem}

As stated at the beginning of this section, for the remainder of this paper we restrict the underlying ring to be a principal ideal domain. 
Hence, from this point on, the following notation and conventions hold.

\begin{NotConv}\label{conv:pid}
Let $D$ be a principal ideal domain and \Primes be a complete set of representatives of associate classes of prime elements of $D$. Note that $J=(d)$ for some $d\in D$.  We write $\NullIovD{d}(A)$ instead of $\NullIovD{(d)}(A)$ (and omit the superscript $D$). For the residue classes modulo $d$, we often write \Res{\,.\,}{d} instead of \Res{\,.\,}{(d)}.
\end{NotConv}

The first result of this section is the following lemma. It states a simple but crucial relation between the degrees and the leading coefficients of polynomials in the $(d)$-ideal of a matrix. 
Observe that if the leading coefficient of a polynomial $g\in D[X]$ (denoted by $\lc(g)$) is coprime to $d$, then it is a unit modulo~$d$. Hence, there exists an element $c\in D$ such that $\Res{cg}{d}$ is a monic polynomial in $\FacRing{D}{d}{}[X]$. In particular, this implies the following lemma.

% ----------------------------------------------------------------------------------------------------------------------
% ----------------------------------------------------------------------------------------------------------------------
\begin{Lem}\label{lem:leadingcoeffanddegree}
 Let $D$ be a principal ideal domain and $d\in D$ with $d\notin \{0,1\}$. If $f\in D[X]$ is a $(d)$-minimal polynomial, then all polynomials $g\in \NullIovD{d}(A)$ with $\deg(g) < \deg(f)$ have a leading coefficient $\lc(g)$ which is not invertible modulo $d$, that is, $\gcd(\lc(g),d) \neq 1$.
\end{Lem}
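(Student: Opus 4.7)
The plan is to argue by contradiction. Suppose that there is some $g\in\NullIovD{d}(A)$ with $\deg(g)<\deg(f)$ and $\gcd(\lc(g),d)=1$. I will manufacture from $g$ a monic polynomial $h\in D[X]$ that still lies in $\NullIovD{d}(A)$ and whose degree is $\deg(g)<\deg(f)$, directly contradicting the minimality of the degree of $f$.

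First, using $\gcd(\lc(g),d)=1$, pick $c\in D$ with $c\,\lc(g)\equiv 1\pmod d$, say $c\,\lc(g)=1+de$ for some $e\in D$. Set $n=\deg(g)$ and consider
\begin{align*}
 h = cg - de\,X^n.
\end{align*}
By construction the coefficient of $X^n$ in $h$ equals $c\,\lc(g)-de=1$, so $h$ is monic of degree $n$, and clearly $h\in D[X]$.

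Next I verify that $h\in\NullIovD{d}(A)$. Since $\NullIovD{d}(A)$ is an ideal of $D[X]$, the element $cg$ lies in it, so $cg(A)\in d\Matrn{D}$. Also $de\,A^n\in d\Matrn{D}$ trivially. Therefore $h(A)=cg(A)-de\,A^n\in d\Matrn{D}$, i.e.\ $h\in\NullIovD{d}(A)$. But now $h$ is a monic polynomial in $\NullIovD{d}(A)$ with $\deg(h)=n<\deg(f)$, contradicting the assumption that $f$ is a $(d)$-minimal polynomial of $A$.

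I do not anticipate any real obstacle: the only subtlety is producing a monic lift of $cg\bmod d$ back to $D[X]$ while staying inside the $(d)$-ideal, and the correction term $de\,X^n$ handles this because $de\equiv 0\pmod d$. The assumption $d\notin\{0,1\}$ is used only implicitly (it ensures the notion of $(d)$-minimal polynomial is nontrivial, compare Remarks~\ref{rem:defminpoly_ovD} and~\ref{rem:01mp}); the computation itself works for any $d\in D$.
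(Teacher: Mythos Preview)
Your proof is correct and follows essentially the same idea as the paper. The paper dispatches the lemma with the sentence preceding it: if $\gcd(\lc(g),d)=1$ then $\lc(g)$ is a unit modulo $d$, so some $c\in D$ makes $[cg]_d$ monic in $(D/dD)[X]$, contradicting minimality via Remark~\ref{rem:defminpoly_ovD}. You carry out the same B\'ezout argument but stay in $D[X]$ by explicitly subtracting the correction term $de\,X^n$ to obtain a genuinely monic lift; this is a cosmetic difference in presentation, not a different approach.
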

% ----------------------------------------------------------------------------------------------------------------------
% ----------------------------------------------------------------------------------------------------------------------

Recall that $\NullIovD{0}(A) = \NullI(A)$ is the null ideal of $A$ over $D$. 
Further, $D$ is integrally closed, since it is a principal ideal domain. As mentioned in the introduction, this implies that the minimal polynomial of every square matrix in $\Matrn{D}$ is in $D[X]$ and generates its null ideal. 
In particular, 
\begin{align*}
\NullIovD{0}(A) = \NullI(A)=  \mu_AD[X] 
\end{align*}
holds, where $\mu_A\in D[X]$ is the minimal polynomial of $A$ over $K$. This completes the case $d=0$.
For $d\neq 0$, we first observe, that it suffices to compute $\NullIovD{d}(A)$ for $d=p^{\ell}$ with $p\in D$ a prime element and $\ell\in \N$.

% ----------------------------------------------------------------------------------------------------------------------
% ----------------------------------------------------------------------------------------------------------------------
\begin{Lem}\label{lem:CRT>primepowers}
 Let $D$ be a principal ideal domain, $A\in \Matrn{D}$ and $a,b\in D$ be coprime elements. Then 
 \begin{align*}
   \NullIovD{ab}(A) = a\,\NullIovD{b}(A) + b\,\NullIovD{a}(A).
 \end{align*}
\end{Lem}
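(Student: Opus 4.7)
The plan is to prove both inclusions separately, with the nontrivial direction using a Bézout-type decomposition.

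For the inclusion $\supseteq$, I would argue directly from the definition: if $f\in\NullIovD{b}(A)$, then $f(A)\in b\Matrn{D}$, whence $af(A)\in ab\Matrn{D}$, so $af\in\NullIovD{ab}(A)$. By symmetry $bg\in\NullIovD{ab}(A)$ for $g\in\NullIovD{a}(A)$, and since $\NullIovD{ab}(A)$ is an ideal of $D[X]$ this gives $a\,\NullIovD{b}(A)+b\,\NullIovD{a}(A)\subseteq\NullIovD{ab}(A)$.

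For the reverse inclusion, the key tool is that $a$ and $b$ are coprime in the PID $D$, so by Bézout there exist $u,v\in D$ with $ua+vb=1$. Given any $f\in\NullIovD{ab}(A)$, I would write
\begin{align*}
 f = (ua+vb)f = a(uf) + b(vf)
\end{align*}
and verify that the two summands lie in the correct pieces. Since $f(A)\in ab\Matrn{D}$, one has $(uf)(A) = u\,f(A) \in uab\Matrn{D}\subseteq b\Matrn{D}$, so $uf\in\NullIovD{b}(A)$, and $a(uf)\in a\,\NullIovD{b}(A)$. Analogously $vf\in\NullIovD{a}(A)$ and $b(vf)\in b\,\NullIovD{a}(A)$, which yields the desired decomposition of $f$.

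I do not anticipate any real obstacle: the statement is essentially a CRT-style splitting at the level of the $J$-ideal, and the only input beyond the definition of $\NullIovD{d}(A)$ is Bézout's identity, which is available since $D$ is a PID. Once the two inclusions are established the lemma is proved.
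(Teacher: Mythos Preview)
Your proof is correct and follows essentially the same Bézout/CRT decomposition as the paper: both write an element of $\NullIovD{ab}(A)$ as $a(\cdot)+b(\cdot)$ using coprimality and then verify that the two factors land in $\NullIovD{b}(A)$ and $\NullIovD{a}(A)$, respectively. Your verification is in fact slightly more direct, since by taking $h_1=uf$ and $h_2=vf$ explicitly you get $(uf)(A)=u\,f(A)\in b\Matrn{D}$ immediately from $f(A)\in ab\Matrn{D}$, whereas the paper argues via $ah_1(A)=g(A)-bh_2(A)\in b\Matrn{D}$ and then cancels $a$ using $\gcd(a,b)=1$.
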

% ----------------------------------------------------------------------------------------------------------------------
\begin{proof}
The inclusion ``$\supseteq$'' is trivial.
For ``$\subseteq$'', let $g\in\NullIovD{ab}(A)$. Since $a$ and $b$ are coprime, there exist $h_1,h_2\in D[X]$ such that 
\begin{align*}
 g = a h_1 + b h_2.
\end{align*}
Then
\begin{align*}
 a h_1 (A) &= g(A) - b h_2(A) \in b\Matrn{D} \text{ and } \\
 b h_2 (A) &= g(A) - a h_1(A) \in a\Matrn{D}.
\end{align*}
It follows that $h_1 \in \NullIovD{b}(A)$ and $h_2 \in \NullIovD{a}(A)$, which completes the proof.
\end{proof}
% ----------------------------------------------------------------------------------------------------------------------
% ----------------------------------------------------------------------------------------------------------------------

% ----------------------------------------------------------------------------------------------------------------------
% --------------------------------------------------------
\begin{NotConv}
For the rest of this section we fix the prime element $p\in D$. If $A\in \Matrn{D}$ is fixed, we often write $\NullIovD{p^{\ell}}$ instead of  $\NullIovD{p^{\ell}}(A)$.
\end{NotConv}
% ----------------------------------------------------------------------------------------------------------------------
% ----------------------------------------------------------------------------------------------------------------------

Our goal is to determine polynomials $f_0,\ldots,f_m\in D[X]$ such that 
\begin{align*}
 \NullIovD{p^{\ell}}(A) = \{\,f \in D[X] \mid f(A) \equiv 0 \bmod p^{\ell}\,\} 
                 = \sum_{i=0}^m f_i D[X] 
\end{align*}
for $A\in \Matrn{D}$.
Since $\FacRing{D}{p}{}$ is a field, the null ideal of $A$ modulo $p$ is a principal ideal. Hence 
\begin{align*}
\NullIovD{p}(A) = \nu_1D[X] + pD[X] 
\end{align*}
where $\nu_1$ is a $(p)$-minimal polynomial of $A$.
The degree of $\nu_1$ is, by definition, independent of the choice of a $(p)$-minimal polynomial. 
% ----------------------------------------------------------------------------------------------------------------------
% ----------------------------------------------------------------------------------------------------------------------
\begin{Def}\label{def:pdeg}
 Let $\nu_1 \in D[X]$ be a $(p)$-minimal polynomial $A$. We call $\pdeg(A) = \deg(\nu_1)$ \df{the $p$-degree of $A$} and write $\pdeg$ if the matrix is clear from the context.
\end{Def}
% ----------------------------------------------------------------------------------------------------------------------
% ----------------------------------------------------------------------------------------------------------------------
Note again, that this definition depends only on the residue class of $A$ modulo $p$, cf.~Remark~\ref{rem:defminpoly_ovD}.
Observe that the following inclusions hold
\begin{align*}
\resizebox{.98\hsize}{!}{%
$
\mu_AD[X] 
= \NullI(A) 
= \NullIovD{0} \subseteq \cdots \subseteq \NullIovD{p^{\ell}} \subseteq \NullIovD{p^{\ell-1}} \subseteq \cdots \subseteq \NullIovD{p} = \nu_1D[X]+pD[X] 
\subseteq  D[X] = \NullIovD{1} 
$
}
\end{align*}
where $\nu_1$ is a $(p)$-minimal polynomial of $A$. The $p$-degree of $A$ is a lower bound for the degree of all polynomials in $\NullIovD{p^{\ell}}\setminus p^{\ell}D[X]$, as the following lemma states.

% ----------------------------------------------------------------------------------------------------------------------
% ----------------------------------------------------------------------------------------------------------------------
\begin{Lem}\label{lem:degree>=k}
Let $D$ be a principal ideal domain, $\ell \geq 1$ and $A\in\Matr{n}{D}$. If $f\in \NullIovD{p^{\ell}}(A)\setminus p^{\ell}D[X]$, then $\deg(f) \geq \pdeg(A)$.
\end{Lem}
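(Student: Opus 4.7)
The plan is to reduce to the mod $p$ situation, where $\FacRing{D}{p}{}$ is a field and so a minimal polynomial of $\Res{A}{p}$ is available. The key observation is that even though $f$ itself need not be a $(p)$-ideal element in any useful monic form, after stripping out the maximal power of $p$ dividing all coefficients of $f$, what remains will reduce to a nonzero polynomial modulo $p$ that still annihilates $A$ modulo $p$.

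More precisely, I would let $k \geq 0$ be the largest integer such that $p^k$ divides every coefficient of $f$, and write $f = p^k g$ with $g \in D[X]$. Since $f \notin p^{\ell}D[X]$, we have $k < \ell$, and by maximality of $k$ at least one coefficient of $g$ is not divisible by $p$, i.e.\ $\Res{g}{p} \neq 0$ in $\FacRing{D}{p}{}[X]$. From $f(A) \in p^{\ell}\Matrn{D}$ I would deduce $p^k g(A) \in p^{\ell}\Matrn{D}$, hence $g(A) \in p^{\ell-k}\Matrn{D} \subseteq p\Matrn{D}$, so $g \in \NullIovD{p}(A)$.

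Now I would pass to $\FacRing{D}{p}{}[X]$: the nonzero polynomial $\Res{g}{p}$ lies in $\NullI[\FacRing{D}{p}{}](\Res{A}{p})$. Since $\FacRing{D}{p}{}$ is a field, this null ideal is principal, generated by the minimal polynomial of $\Res{A}{p}$, which is precisely $\Res{\nu_1}{p}$ with $\deg(\Res{\nu_1}{p}) = \pdeg(A)$. Divisibility in $\FacRing{D}{p}{}[X]$ therefore gives $\deg(\Res{g}{p}) \geq \pdeg(A)$, and since degrees can only drop under reduction mod $p$,
\begin{align*}
\deg(f) \;\geq\; \deg(g) \;\geq\; \deg(\Res{g}{p}) \;\geq\; \pdeg(A).
\end{align*}

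I do not expect any real obstacle here; the only subtle point is being careful that passing from $f$ to $g$ preserves enough information, namely that the content-removed polynomial has nonzero image modulo $p$, so that the field-coefficient minimal polynomial argument applies. This is precisely what the choice of $k$ as the $p$-adic content of $f$ ensures.
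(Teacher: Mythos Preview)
Your argument is correct, and it is genuinely shorter than the paper's. The paper proceeds by contradiction with an induction on $\ell$: for $\ell=1$ it chooses a counterexample $f$ of minimal degree, observes via Lemma~\ref{lem:leadingcoeffanddegree} that $p\mid\lc(f)$, peels off the leading term, and derives a contradiction; for $\ell>1$ it first pushes $f$ down into $p^{\ell-1}D[X]$ by the induction hypothesis and then reduces to the $\ell=1$ case. You bypass both inductions by stripping the $p$-content in one step and invoking the principality of the null ideal over the field $\FacRing{D}{p}{}$ directly. What your route buys is conceptual transparency: the lemma is really a statement about $\FacRing{D}{p}{}$, and your proof makes that visible. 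The paper's route stays closer to the machinery (Lemma~\ref{lem:leadingcoeffanddegree}) that will be reused later, but for this particular lemma your argument is cleaner. One cosmetic remark: since $D$ is a domain you actually have $\deg(f)=\deg(g)$, not just $\deg(f)\geq\deg(g)$, but the inequality is all you need.
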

% ----------------------------------------------------------------------------------------------------------------------
\begin{proof}
We prove this by contradiction. Let $\ell\geq 1$ be minimal such that there exists a polynomial $f\in \NullIovD{p^{\ell}}\setminus p^{\ell}D[X]$ with $\deg(f) < \pdeg$. Without restriction, we choose $f$ to be a polynomial of minimal degree with this property, that is, if $g\in \NullIovD{p^{\ell}}$ with $\deg(g) < \deg(f)$, then $g\in p^{\ell}D[X]$.

If $\ell= 1$, then $p$ divides $\lc(f)$ according to Lemma~\ref{lem:leadingcoeffanddegree}. Hence $f'=\lc(f)X^{\deg(f)}\in pD[X]\subseteq \NullIovD{p}$, and therefore $f-f'\in \NullIovD{p}$ is a polynomial with degree strictly smaller than $\deg(f)$.   Therefore $f-f'\in pD[X]$ which implies $f\in pD[X]$, a contradiction. 

Hence $\ell>1$, and since $f\in \NullIovD{p^{\ell}}$ it follows that $f\in \NullIovD{p^{\ell-1}}$. Then, due to the minimality of $\ell$, it follows that $f\in p^{\ell-1}D[X]$. Let $h\in D[X]$ such that $f=p^{\ell-1}h$. Then $\deg(h) = \deg(f) <\pdeg$ and 
\begin{align*}
f(A) = p^{\ell-1}h(A) \equiv 0 \mod p^{\ell} 
\end{align*}
which is equivalent to $h\in \NullIovD{p}$. Then again, by minimality of  $\ell>1$, it follows that $h\in pD[X]$ and therefore $f\in p^{\ell}D[X]$, contrary to our assumption.

\end{proof}
% ----------------------------------------------------------------------------------------------------------------------
% ----------------------------------------------------------------------------------------------------------------------
% 

The next proposition provides one of the main tools in this section. 
It states a simple but important result, which allows us to deduce various properties of the generators of \NullIovD{p^{\ell}}. 

% ----------------------------------------------------------------------------------------------------------------------
% ----------------------------------------------------------------------------------------------------------------------
\begin{Pro}\label{prop:decompositionofnullpolys}
Let $D$ be a principal ideal domain, $p\in D$ a prime element. Further, let $A\in\Matr{n}{D}$ be a square matrix over $D$, and $\nu_{\ell}$ be a $(p^{\ell})$-minimal polynomial of $A$ (for $\ell \geq 1$). If
$f\in \NullIovD{p^{\ell}}(A)$, then there exist uniquely determined polynomials $q,g\in D[X]$ such that $\deg(g) < \deg(\nu_{\ell})$ and
\begin{align*}
 f = q\nu_{\ell} + pg .
\end{align*}
In particular, 
\begin{align*}
 \NullIovD{p^{\ell}}(A) = \nu_{\ell}D[X] + p\,\NullIovD{p^{\ell-1}}(A).
\end{align*}
\end{Pro}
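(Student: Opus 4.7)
My plan is to establish the proposition in three parts: uniqueness of the decomposition, existence via polynomial division, and then the ``in particular'' ideal identity.

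Uniqueness is standard: if $f = q_1\nu_{\ell} + pg_1 = q_2\nu_{\ell} + pg_2$ with $\deg(g_i) < \deg(\nu_{\ell})$, then $(q_1 - q_2)\nu_{\ell} = p(g_2 - g_1)$. Since $\nu_{\ell}$ is monic, the left side has degree $\geq \deg(\nu_{\ell})$ whenever $q_1 \neq q_2$, whereas the right side has degree $< \deg(\nu_{\ell})$; hence $q_1 = q_2$, after which $p(g_2 - g_1) = 0$ forces $g_1 = g_2$ because $D[X]$ is an integral domain.

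For existence, polynomial division of $f$ by the monic $\nu_{\ell}$ produces $f = q\nu_{\ell} + r$ with $\deg(r) < \deg(\nu_{\ell})$; both $f$ and $q\nu_{\ell}$ lie in $\NullIovD{p^{\ell}}(A)$, hence so does $r$. The central task is the following claim: if $r \in \NullIovD{p^{\ell}}(A)$ and $\deg(r) < \deg(\nu_{\ell})$, then $r \in pD[X]$ (so $r = pg$ with $\deg(g) = \deg(r) < \deg(\nu_{\ell})$). I would prove this claim by induction on $\ell$. The base $\ell = 1$ is immediate from $\NullIovD{p}(A) = \nu_1 D[X] + pD[X]$ combined with polynomial division by the monic $\nu_1$. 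For the inductive step, Lemma~\ref{lem:leadingcoeffanddegree} already forces $p \mid \lc(r)$; since $r \in \NullIovD{p^{\ell-1}}(A)$, if $\deg(r) < \deg(\nu_{\ell-1})$ the inductive hypothesis closes the case directly. In the remaining subcase $\deg(r) \geq \deg(\nu_{\ell-1})$, polynomial division by $\nu_{\ell-1}$ gives $r = q'\nu_{\ell-1} + r''$ with $\deg(r'') < \deg(\nu_{\ell-1})$ and, by the inductive hypothesis applied to $r''$, $r'' \in pD[X]$. The problem then reduces to showing $q' \in pD[X]$. For this I would exploit the strictly stronger condition $r(A) \in p^{\ell}\Matrn{D}$ (as opposed to merely $p^{\ell-1}\Matrn{D}$) together with the matrix identity $\nu_{\ell-1}(A) = p^{\ell-1}B$ for some $B \in \Matrn{D}$: the equation $r(A) = p^{\ell-1}q'(A)B + r''(A) \in p^{\ell}\Matrn{D}$ combined with $r''/p \in \NullIovD{p^{\ell-2}}(A)$ and the fact that $B \notin p\Matrn{D}$ in the only nontrivial case $\deg(\nu_{\ell}) > \deg(\nu_{\ell-1})$ should yield the contradiction when $q' \notin pD[X]$.

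The ideal identity $\NullIovD{p^{\ell}}(A) = \nu_{\ell}D[X] + p\NullIovD{p^{\ell-1}}(A)$ then follows readily: ``$\supseteq$'' is immediate from $\nu_{\ell}(A) \in p^{\ell}\Matrn{D}$ and the observation that $h \in \NullIovD{p^{\ell-1}}(A)$ implies $ph \in \NullIovD{p^{\ell}}(A)$, while ``$\subseteq$'' is the decomposition established above, noting that $pg = f - q\nu_{\ell} \in \NullIovD{p^{\ell}}(A)$ automatically forces $g \in \NullIovD{p^{\ell-1}}(A)$ by cancellation of $p$ in the integral domain $D$. The main obstacle is unquestionably the inductive step of the central claim, specifically the subcase $\deg(r) \geq \deg(\nu_{\ell-1})$: Lemma~\ref{lem:leadingcoeffanddegree} only delivers divisibility of the leading coefficient of $r$, and propagating $p$-divisibility to every coefficient requires delicate matrix-level bookkeeping with $B = \nu_{\ell-1}(A)/p^{\ell-1}$ while avoiding circular invocation of the statement under proof.
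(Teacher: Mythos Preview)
Your overall architecture (uniqueness, polynomial division reducing to the Claim, and the derivation of the ideal identity) matches the paper's, and the base case is fine. The genuine gap is in the inductive step, precisely in the subcase $\deg(r) \geq \deg(\nu_{\ell-1})$ where you try to force $q' \in pD[X]$ via the matrix $B = \nu_{\ell-1}(A)/p^{\ell-1}$. Two things fail there. First, from $r(A)\in p^{\ell}\Matrn{D}$ and $r''(A)\in p^{\ell-1}\Matrn{D}$ you only obtain $q'(A)B \equiv -\,r''(A)/p^{\ell-1} \pmod{p}$, not $q'(A)B \in p\Matrn{D}$; to get the latter you would need $r''(A)\in p^{\ell}\Matrn{D}$, which is exactly the statement you are trying to prove (for $r''$ in place of $r$), so the argument is circular. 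Second, even if you had $q'(A)B \in p\Matrn{D}$ with $B\notin p\Matrn{D}$, this does not imply $q'\in pD[X]$: the ring $\Matrn{D/pD}$ has zero-divisors, so $\overline{q'(A)}\cdot\overline{B}=0$ with $\overline{B}\neq 0$ tells you nothing about $q'$ as a polynomial. The ``matrix-level bookkeeping'' you anticipate being delicate is in fact a dead end.

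The paper avoids matrices entirely at this point. After writing $r = q'\nu_{\ell-1} + pg'$ with $\deg(g')<\deg(\nu_{\ell-1})$ (via the inductive hypothesis at level $\ell-1$, which gives exactly this decomposition), it splits $q' = pq_1 + q_2$ where $q_2$ has no nonzero coefficient divisible by $p$. Since $p\nu_{\ell-1}\in \NullIovD{p^{\ell}}$, the polynomial $f' := q_2\nu_{\ell-1} + pg'$ also lies in $\NullIovD{p^{\ell}}$. But $\nu_{\ell-1}$ is monic and $\deg(pg')<\deg(\nu_{\ell-1})$, so $\lc(f') = \lc(q_2)$ is coprime to $p$, while $\deg(f')\le \deg(r)<\deg(\nu_{\ell})$ --- contradicting Lemma~\ref{lem:leadingcoeffanddegree}. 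This is a one-line polynomial argument; no information about $B$ or $\Matrn{D}$ beyond the definition of $\NullIovD{p^{\ell}}$ is used.
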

% ----------------------------------------------------------------------------------------------------------------------

\begin{proof}
Let $f\in \NullIovD{p^{\ell}}$. Since $\nu_{\ell}$ is monic for every $\ell \geq 1$, 
we can use polynomial division: there exist uniquely determined $q,r\in D[X]$ with $\deg(r) < \deg(\nu_{\ell})$ such that 
\begin{align}\label{prf:polydiv}
 f = q\nu_{\ell} +r.
\end{align}
It is easily seen that $r\in \NullIovD{p^{\ell}}$, hence it suffices to prove the following claim.

\begin{Claim}
 Let $r\in \NullIovD{p^{\ell}}$ with $\deg(r)<\deg(\nu_{\ell})$. Then $r\in pD[X]$.
\end{Claim}

If $\ell=1$, then the assertion follows from Lemma~\ref{lem:degree>=k}.
Let $\ell>1$ be minimal such that the claim is false. Further, choose $r\in \NullIovD{p^{\ell}}$ with $\deg(r)<\deg(\nu_{\ell})$ of minimal degree such that $r\notin pD[X]$.  Since $r\in \NullIovD{p^{\ell}}$ it is in $\NullIovD{p^{\ell-1}}$ too.  By minimality of $\ell$, there exist $q',g'\in D[X]$ such that 
\begin{align*}
  r = q'\nu_{\ell-1} + pg' 
\end{align*}
with $\deg (g') < \deg(\nu_{\ell -1})$. Since $r\notin pD[X]$, it follows that $q'\notin pD[X]$. Therefore, there exists $q_1,q_2\in D[X]$ with $q_2\neq 0$ and no non-zero coefficient of $q_2$ is divisible by $p$ such that 
\begin{align*}
 q' = p q_1 +q_2.
\end{align*}
Hence $r$ can be written in the following form
\begin{align*}
 r = q_1\underbrace{p\nu_{\ell-1}}_{\in \NullIovD{p^{\ell}}} + q_2\nu_{\ell-1} +pg' \in \NullIovD{p^{\ell}}.
\end{align*}
This, however, implies that $f' = q_2\nu_{\ell-1} +pg' \in \NullIovD{p^{\ell}}$. 
Observe, that $\deg(g') < \deg(\nu_{\ell-1})$ which implies that $\lc(f') = \lc(q_2)\lc(\nu_{\ell-1}) = \lc(q_2)$ is not divisible by $p$. On the other hand, 
\begin{align*}
\deg(f') = \deg(q_2)+\deg(\nu_{\ell-1}) \leq  \deg(r)< \deg(\nu_{\ell}) 
\end{align*}
which implies, by Lemma~\ref{lem:leadingcoeffanddegree}, that $p$ divides $\lc(f')$, a contradiction.
\end{proof}
% ----------------------------------------------------------------------------------------------------------------------
% ----------------------------------------------------------------------------------------------------------------------

We state a corollary of Proposition~\ref{prop:decompositionofnullpolys}, which is particularly useful: the smaller the degree of a polynomial in \NullIovD{p^{\ell}}, the higher the power of $p$ that divides it. 
% ----------------------------------------------------------------------------------------------------------------------
% ----------------------------------------------------------------------------------------------------------------------
\begin{Cor}\label{cor:degreeandppower}
 Let $D$ be a principal ideal domain and $p\in D$ a prime element. Further, let $A\in \Matr{n}{D}$, $\ell\geq 1$, and $\nu_j$ be $(p^j)$-minimal polynomials of $A$ for $1\leq j\leq \ell$. If $f\in \NullIovD{p^{\ell}}(A)$, then  
\begin{align*}
 \deg (f) < \deg(\nu_j) \quad \Longrightarrow \quad f \in p^{\ell-(j-1)}D[X].  
\end{align*}
In particular, if $\deg(\nu_{\ell}) = \deg(\nu_j)$, then 
\begin{align*}
 \NullIovD{p^{\ell}}(A) = \nu_{\ell}D[X] + p^{\ell-(j-1)}\NullIovD{p^{j-1}}(A) 
\end{align*}
holds.
\end{Cor}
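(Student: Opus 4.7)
The plan is to derive both statements from iterated applications of Proposition~\ref{prop:decompositionofnullpolys}. The first observation I need is the monotonicity of the sequence of degrees $\deg(\nu_k)$: since $\NullIovD{p^k}(A) \subseteq \NullIovD{p^{k-1}}(A)$ and $\nu_{k-1}$ is a monic polynomial of minimal degree in the larger ideal, we have $\deg(\nu_{k-1}) \leq \deg(\nu_k)$. In particular, $\deg(\nu_j) \leq \deg(\nu_k)$ whenever $j \leq k$, so the degree hypothesis $\deg(f) < \deg(\nu_j)$ propagates up the chain.

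For the main implication, I would prove by downward induction on $k \in \{j-1, j, \ldots, \ell\}$ that every $f \in \NullIovD{p^\ell}(A)$ with $\deg(f) < \deg(\nu_j)$ satisfies $f \in p^{\ell-k}\, \NullIovD{p^k}(A)$. The base case $k = \ell$ is the hypothesis. For the inductive step from $k$ to $k-1$ with $k \geq j$, write $f = p^{\ell-k} g$ with $g \in \NullIovD{p^k}(A)$. Since $\deg(g) = \deg(f) < \deg(\nu_j) \leq \deg(\nu_k)$, polynomial division of $g$ by the monic polynomial $\nu_k$ has quotient $0$, so Proposition~\ref{prop:decompositionofnullpolys} applied to $g$ with $\nu_k$ forces $g = p g'$ for some $g' \in \NullIovD{p^{k-1}}(A)$. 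Substituting gives $f = p^{\ell-(k-1)} g' \in p^{\ell-(k-1)}\, \NullIovD{p^{k-1}}(A)$. Evaluating the induction at $k = j-1$ produces $f \in p^{\ell-(j-1)}\, \NullIovD{p^{j-1}}(A) \subseteq p^{\ell-(j-1)} D[X]$, as required.

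For the \emph{In particular} part, the inclusion $\supseteq$ is immediate, since $\nu_\ell \in \NullIovD{p^\ell}(A)$ and any $h \in \NullIovD{p^{j-1}}(A)$ satisfies $p^{\ell-(j-1)} h(A) \in p^\ell \Matrn{D}$. For $\subseteq$, given $f \in \NullIovD{p^\ell}(A)$, polynomial division by the monic $\nu_\ell$ produces $f = q\nu_\ell + r$ with $\deg(r) < \deg(\nu_\ell) = \deg(\nu_j)$. Since $r = f - q\nu_\ell \in \NullIovD{p^\ell}(A)$, the first part supplies $h \in D[X]$ with $r = p^{\ell-(j-1)} h$; the congruence $p^{\ell-(j-1)} h(A) \equiv 0 \bmod p^\ell$ then forces $h \in \NullIovD{p^{j-1}}(A)$, completing the decomposition $f = q\nu_\ell + p^{\ell-(j-1)} h$. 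I do not foresee a genuine obstacle here: the argument is essentially bookkeeping over Proposition~\ref{prop:decompositionofnullpolys}, with the only subtle point being to invoke the \emph{correct} minimal polynomial $\nu_k$ at each inductive stage, rather than $\nu_\ell$ or $\nu_j$ throughout, so that the degree inequality needed to kill the quotient in polynomial division remains available.
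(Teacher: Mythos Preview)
Your proposal is correct and follows essentially the same strategy as the paper: both arguments peel off one factor of $p$ at a time via Proposition~\ref{prop:decompositionofnullpolys}. The paper phrases this as upward induction on $\ell$ (fixing $j$ and reducing to $\ell-1$), whereas you run a downward induction on an auxiliary index $k$ from $\ell$ to $j-1$; these are the same ``iterated Proposition~\ref{prop:decompositionofnullpolys}'' computation written in two directions. Your version has the minor advantage of tracking the stronger conclusion $f \in p^{\ell-(j-1)}\NullIovD{p^{j-1}}(A)$ along the way, and you spell out the ``in particular'' clause explicitly (the paper leaves this to the reader), but there is no substantive difference in method.
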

% ----------------------------------------------------------------------------------------------------------------------
\begin{proof}
We use induction on $\ell\geq 1$. Let $f\in \NullIovD{p^{\ell}}$ with $\deg (f) < \deg(\nu_j)\leq \deg(\nu_{\ell})$. Observe, that $f = pg$ for some $g\in \NullIovD{p^{\ell-1}}$, according to Proposition~\ref{prop:decompositionofnullpolys}. 
Hence if $\ell=j\geq 1$, then the assertion follows. In particular, if $\ell=1$, then $j=1$ which proves the basis.

Hence assume $\ell >j>1$. Then $j\leq \ell-1$ and we can apply the induction hypothesis to $g\in \NullIovD{p^{\ell-1}}$ and conclude that $g\in p^{\ell-1-(j-1)}D[X]$ which completes the proof.
\end{proof}
% ----------------------------------------------------------------------------------------------------------------------
% ----------------------------------------------------------------------------------------------------------------------
At this point, we have enough tools to prove that the polynomials $p^{\ell-i}\nu_i$ generate \NullIovD{p^{\ell}}.

Recall that $\NullIovD{1}(A) = D[X]$ is generated by the constant polynomial $1$ (see~Remark~\ref{rem:01mp}). Therefore the constant polynomial $\nu_0 = 1$ is the (uniquely determined) $(p^0)$-minimal polynomial of $A$ for all prime elements $p$. 

Again, we use induction on $\ell$ and $\NullIovD{1}(A)=\NullIovD{p^0}(A) = D[X] = p^0\nu_0D[X]$ serves as induction basis. The induction step is an application of Proposition~\ref{prop:decompositionofnullpolys}. 
% 

% ----------------------------------------------------------------------------------------------------------------------
\begin{Thm}\label{thm:NullI}
Let $D$ be a principal ideal domain and $p\in D$ a prime element. Further, let $A\in \Matr{n}{D}$ be a square matrix over $D$, $\ell\geq 0$, and $\nu_j\in D[X]$ be $(p^j)$-minimal polynomials of $A$ for $0\leq j\leq \ell$. Then 
\begin{align*}
 \NullIovD{p^{\ell}}(A) =\sum_{j=0}^{\ell} p^{\ell-j}\nu_j D[X].
\end{align*}
\begin{flushright}
 \qed
\end{flushright}

\end{Thm}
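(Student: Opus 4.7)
The plan is to follow the hint given right before the theorem: induct on $\ell$, using Proposition~\ref{prop:decompositionofnullpolys} as the engine.

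For the base case $\ell = 0$, one has $\NullIovD{p^0}(A) = \NullIovD{1}(A) = D[X]$ by Remark~\ref{rem:01mp}, and since $\nu_0 = 1$ is the unique $(p^0)$-minimal polynomial of $A$, the right-hand side is $p^0 \nu_0 D[X] = D[X]$, matching the left-hand side.

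For the inductive step, suppose the identity holds for $\ell - 1$, namely
\begin{align*}
\NullIovD{p^{\ell-1}}(A) = \sum_{j=0}^{\ell-1} p^{\ell-1-j}\nu_j D[X].
\end{align*}
By Proposition~\ref{prop:decompositionofnullpolys}, we have $\NullIovD{p^{\ell}}(A) = \nu_{\ell} D[X] + p\,\NullIovD{p^{\ell-1}}(A)$. Substituting the inductive hypothesis gives
\begin{align*}
\NullIovD{p^{\ell}}(A) = \nu_{\ell} D[X] + \sum_{j=0}^{\ell-1} p \cdot p^{\ell-1-j}\nu_j D[X] = \sum_{j=0}^{\ell} p^{\ell-j}\nu_j D[X],
\end{align*}
where the $j=\ell$ summand absorbs the term $\nu_\ell D[X] = p^{\ell-\ell}\nu_\ell D[X]$. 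This closes the induction.

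Since Proposition~\ref{prop:decompositionofnullpolys} has already been established (and it is the genuinely hard input, relying on Lemma~\ref{lem:leadingcoeffanddegree} and Lemma~\ref{lem:degree>=k}), the theorem itself is essentially a bookkeeping exercise. I do not anticipate any real obstacle; the only point to be a little careful about is that the $\nu_j$ are not uniquely determined, but the statement is really about the \emph{ideal} generated by the collection, and the proof above produces the required equality for any fixed choice of $(p^j)$-minimal polynomials.
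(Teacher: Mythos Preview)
Your proof is correct and follows precisely the approach indicated in the paper: induction on $\ell$, with the base case $\NullIovD{p^0}(A)=D[X]=p^0\nu_0 D[X]$ and the inductive step given directly by Proposition~\ref{prop:decompositionofnullpolys}. The paper in fact states only this outline and marks the theorem with a \qed, so you have simply spelled out the details.
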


% ----------------------------------------------------------------------------------------------------------------------
% 
Theorem~\ref{thm:NullI} states that the null ideal \NullIovD{p^{\ell}} of $A$  is generated by the $\ell +1$ polynomials $p^{\ell-i}\nu_i$  for $0\leq i \leq \ell$. However, in general this is not a minimal generating set. While we are not able to decide which subsets are minimal generating sets, we can still identify some redundant polynomials in $\{\,p^{\ell-i}\nu_i \mid 0\leq i \leq \ell\,\}$.
Note that $\deg(\nu_{i+1}) \geq \deg(\nu_{i})$ holds for all $i \geq 0$. It turns out that it suffices to keep one polynomial of each degree in $\{\,\deg(\nu_i) \mid 0\leq i \leq \ell\,\}$ to generate \NullIovD{p^{\ell}}. 
Theorem~\ref{thm:MinGenNullI} states explicitly, which subsets of $\{\,p^{\ell-i}\nu_i \mid 0\leq i \leq \ell\,\}$ we might choose.
Although the resulting generating set might still not be minimal, it is strongly connected to a certain decomposition of $\FacRing{D}{p}{\ell}[\Res{A}{d}]$ into cyclic $\FacRing{D}{p}{\ell}$-submodules which is the topic of Section~\ref{sec:modulestructure}.

Theorem~\ref{thm:NullI} and Corollary~\ref{cor:degreeandppower} imply that, if $\deg(\nu_{j+1})=\deg(\nu_{j})$ for some $0\leq j< \ell$, then $\NullIovD{p^{\ell}}$ is generated by $\{\,p^{\ell-i}\nu_i\mid 0\leq i\leq \ell\,\}\setminus \{p^{\ell-j}\nu_j\}$, cf.~Theorem~\ref{thm:MinGenNullI} below. 
For each $d\in  \{\,\deg(\nu_i)\mid 0\leq i\leq \ell\,\}$ we want to keep only the largest $j$ such that $\deg(\nu_j) = d$.
This motivates the following definition.
% 
% ----------------------------------------------------------------------------------------------------------------------
\begin{Def}\label{def:indexset}
 Let $A\in \Matrn{D}$ be a square matrix with $(p^i)$-minimal polynomials for $1\leq i \leq \ell$. Then we call 
\begin{align*}
\IndexSet{A}{\ell} = \{\ell\} \cup \{\,i \mid 0\leq i < \ell, \deg(\nu_i) < \deg(\nu_{i+1})\,\} 
\end{align*}
the \df{$\ell$-th index set of $A$ (with respect to the prime element $p$)}.
\end{Def}
% ----------------------------------------------------------------------------------------------------------------------
\begin{Rem}\label{rem:index_set_resclass}
The (uniquely determined) degree of a $(p^j)$-minimal polynomial of $A$ depends only on the residue class of $A$ modulo $p^{\ell}$, not on the choice of a representative.
\end{Rem}

% ----------------------------------------------------------------------------------------------------------------------
\begin{Rem}\label{rem:propofellindexset}
 The indices $0$ and $\ell$ are always contained in \IndexSet{A}{\ell}. Further, 
 the $\ell$-th index set \IndexSet{A}{\ell} of $A$ satisfies the following:
\begin{enumerate}
 \item If $\deg \nu_{\ell} \neq \deg \nu_{\ell-1}$, then $\IndexSet{A}{\ell} = \{\ell\} \cup \IndexSet{A}{\ell - 1}$.
 \item If $\deg \nu_{\ell} = \deg \nu_{\ell-1}$, then $\IndexSet{A}{\ell} = \{\ell\} \cup (\IndexSet{A}{\ell-1} \setminus \{\ell-1\})$.
\end{enumerate}
\end{Rem}
% ----------------------------------------------------------------------------------------------------------------------

The $\ell$-th index set of $A$ contains the  information which  $(p^j)$-minimal polynomials we need to generate \NullIovD{p^{\ell}} as stated by the next theorem.

% ----------------------------------------------------------------------------------------------------------------------
% ----------------------------------------------------------------------------------------------------------------------
\begin{Thm}\label{thm:MinGenNullI}
Let $D$ be a principal ideal domain, $p\in D$ a prime element and $\ell\geq 0$. Further, let $A\in \Matr{n}{D}$ be a square matrix over $D$ with $\ell$-th index set $\IndexSet{A}{\ell}$  and $\nu_i\in D[X]$ be $(p^i)$-minimal polynomials for $0\leq i\leq \ell$. Then 
\begin{align*}
 \NullIovD{p^{\ell}}(A) =\sum_{i\in \IndexSet{A}{\ell}} p^{\ell-i}\nu_i D[X]. %+ p^{\ell}D[X]
\end{align*}
\end{Thm}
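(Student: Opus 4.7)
My plan is to derive Theorem~\ref{thm:MinGenNullI} from Theorem~\ref{thm:NullI} by proving that every generator $p^{\ell-j}\nu_j$ with $j\in\{0,\ldots,\ell\}\setminus\IndexSet{A}{\ell}$ is redundant, i.e.\ already lies in $\sum_{i\in\IndexSet{A}{\ell}}p^{\ell-i}\nu_iD[X]$. Note that $\ell\in\IndexSet{A}{\ell}$ by definition, so such ``bad'' $j$ satisfy $j<\ell$; and $0\in\IndexSet{A}{\ell}$ always, because $\nu_0=1$ while $\deg\nu_1\ge 1$ (the non-unit $p$ rules out $1\in\NullIovD{p}(A)$). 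Hence the bad indices satisfy $1\le j<\ell$.

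For such a bad $j$, let $j^{*}=\min\{\,i\in\IndexSet{A}{\ell}\mid i>j\,\}$. By definition of the index set, the degrees agree along the entire run $\deg\nu_j=\deg\nu_{j+1}=\cdots=\deg\nu_{j^{*}}$. The crux of the argument is to compare the monic polynomials $\nu_j$ and $\nu_{j^{*}}$: their difference lies in $\NullIovD{p^j}(A)$ (since $\NullIovD{p^{j^{*}}}\subseteq\NullIovD{p^j}$) and has degree strictly less than $\deg\nu_j$. Corollary~\ref{cor:degreeandppower}, applied with its ``$\ell$'' and its ``$j$'' both equal to our $j$, then forces $\nu_{j^{*}}-\nu_j=pr$ for some $r\in D[X]$; and since $pr(A)\in p^j\Matrn{D}$ it follows that $r\in\NullIovD{p^{j-1}}(A)$.

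With this identity in hand, I rewrite
\[
 p^{\ell-j}\nu_j \;=\; p^{j^{*}-j}\cdot p^{\ell-j^{*}}\nu_{j^{*}}\;-\;p^{\ell-(j-1)}r.
\]
The first summand lies in $p^{\ell-j^{*}}\nu_{j^{*}}D[X]$, and $j^{*}\in\IndexSet{A}{\ell}$. For the second, Theorem~\ref{thm:NullI} applied to $r\in\NullIovD{p^{j-1}}(A)$ expresses $r$ as a $D[X]$-linear combination of $p^{j-1-i}\nu_i$ for $0\le i\le j-1$, so $p^{\ell-(j-1)}r$ becomes a $D[X]$-linear combination of $p^{\ell-i}\nu_i$ with $i<j$. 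A strong induction on $j$ over the bad indices finishes the job: every such $p^{\ell-i}\nu_i$ with $i<j$ is already in $\sum_{i'\in\IndexSet{A}{\ell}}p^{\ell-i'}\nu_{i'}D[X]$, either trivially (if $i\in\IndexSet{A}{\ell}$) or by the induction hypothesis.

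The main obstacle I expect is engineering the key identity $\nu_{j^{*}}-\nu_j=pr$ so that $r$ lives in the \emph{smaller} null ideal $\NullIovD{p^{j-1}}(A)$, rather than merely in $\NullIovD{p^j}(A)$; this is what allows Theorem~\ref{thm:NullI} to produce only generators indexed by $i\le j-1$, keeping the induction below the current level. Choosing $j^{*}$ as the next index in $\IndexSet{A}{\ell}$ (rather than just $j+1$) ensures $p^{\ell-j^{*}}\nu_{j^{*}}$ is a surviving generator, and one must verify Corollary~\ref{cor:degreeandppower} genuinely applies, which requires $j\ge 1$, a condition secured by $0\in\IndexSet{A}{\ell}$. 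Once the identity is established, the induction itself is essentially bookkeeping.
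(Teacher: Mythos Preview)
Your argument is correct, but it follows a different inductive scheme than the paper. The paper inducts on $\ell$: letting $k$ be the largest element of $\IndexSet{A}{\ell}\setminus\{\ell\}$, it invokes the second clause of Corollary~\ref{cor:degreeandppower} (since $\deg\nu_\ell=\deg\nu_{k+1}$) to obtain $\NullIovD{p^\ell}=\nu_\ell D[X]+p^{\ell-k}\NullIovD{p^k}$, applies the induction hypothesis to $\NullIovD{p^k}$, and finishes via $\IndexSet{A}{\ell}=\IndexSet{A}{k}\cup\{\ell\}$ from Remark~\ref{rem:propofellindexset}. This is a top-down peeling that never writes down an explicit relation between two minimal polynomials. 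You instead fix $\ell$, start from the full generating set of Theorem~\ref{thm:NullI}, and eliminate each redundant generator $p^{\ell-j}\nu_j$ individually by exhibiting the explicit identity $\nu_j=\nu_{j^*}-pr$ with $r\in\NullIovD{p^{j-1}}$, then recurse. The paper's route is shorter and purely ideal-theoretic, while yours is more constructive: it actually produces, for each discarded generator, a concrete $D[X]$-linear expression in the surviving ones. One minor simplification available to you: Proposition~\ref{prop:decompositionofnullpolys} applied to $\nu_{j^*}-\nu_j\in\NullIovD{p^j}$ directly yields both $\nu_{j^*}-\nu_j\in pD[X]$ and $r\in\NullIovD{p^{j-1}}$ in one stroke, so you need not separately argue the latter from $pr(A)\in p^j\Matrn{D}$.
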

% ----------------------------------------------------------------------------------------------------------------------
\begin{proof} 
We prove this by induction on $\ell$. If $\ell = 0$, then $\IndexSet{A}{0}=\{0\}$ %If $\ell=1$, then $\IndexSet{A}{1}=\{0,1\}$. 
and the assertion follows from Theorem~\ref{thm:NullI}.
Let $\ell\geq 1$. Then  $\IndexSet{A}{\ell}\setminus \{\ell\}\neq \emptyset$; let $k\leq \ell-1$  be the largest index in $\IndexSet{A}{\ell}\setminus \{\ell\}$. Then $\deg(\nu_{\ell})>\deg(\nu_k)$ and  $\deg(\nu_{\ell})=\deg(\nu_{k+1})$. Corollary~\ref{cor:degreeandppower} implies 
\begin{align*}
 \NullIovD{p^{\ell}} = \nu_{\ell}D[X] + p^{\ell-k} \NullIovD{p^k}.
\end{align*}
However, according to the induction hypothesis, 
\begin{align*}
 \NullIovD{p^k} =\sum_{i\in \IndexSet{A}{k}} p^{k-i}\nu_i D[X] %+ p^{k}D[X]
\end{align*}
holds. In addition, it follows from Remark~\ref{rem:propofellindexset} that $\IndexSet{A}{\ell} = \IndexSet{A}{k}\cup \{\ell\}$ which completes the proof.
\end{proof}
% ----------------------------------------------------------------------------------------------------------------------

\begin{Rem}\label{rem:nullidealgencase}
 For the general case, let $d = \prod_{i=1}^m p_i^{\ell_i}$ be the prime factorization of an element $d\in D$ and $c_i =  \prod_{j\neq i} p_j^{\ell_j}$. Let $\nu_{(p,\ell)}$ denote a $(p^{\ell})$-minimal polynomial and $\IndexSet{A}{(p,\ell)}$ the $\ell$-th index set of $A$ with respect to~the prime element $p$. 
 According to Theorem~\ref{thm:MinGenNullI} and Lemma~\ref{lem:CRT>primepowers}, the following holds:
\begin{align*}
 \NullIovD{d}(A) &= \sum_{i=1}^m %
                   \left( \sum_{j\in \IndexSet{A}{(p_i,\ell_i)}} c_i\,(p_i^{\ell_i-j}\nu_{(p_i,j)})D[X] \right) \\
                 &= \sum_{i=1}^m %
                   \left( \sum_{j\in \IndexSet{A}{(p_i,\ell_i)}} \left(\frac{d}{p_i^j}\,\nu_{(p_i,j)} \right)D[X] \right).
\end{align*}

\end{Rem}

The following assertions are technical observations which are useful later-on.

% ----------------------------------------------------------------------------------------------------------------------
% ----------------------------------------------------------------------------------------------------------------------
\begin{Cor}\label{cor:polyrepdegrees}
Let $D$ be a principal ideal domain and $p\in D$ a prime. Further, let $A\in \Matr{n}{D}$ be a square matrix over $D$ 
with $\ell$-th index set \IndexSet{A}{\ell} (for $\ell\geq 0$) and $\nu_i\in D[X]$ be $(p^i)$-minimal polynomials of $A$. If $f\in \NullIovD{p^{\ell}}(A)$, then 
\begin{align*}
  f \in \sum_{i\in \IndexSet{A}{\ell}^{[f]}} p^{\ell-i}\nu_i\,D[X]
\end{align*}
where $\IndexSet{A}{\ell}^{[f]} = \{\,i \in \IndexSet{A}{\ell} \mid \deg(\nu_i)\leq \deg(f)\,\}$.
\end{Cor}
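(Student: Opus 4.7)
My plan is to prove this by induction on $\ell$, using Proposition~\ref{prop:decompositionofnullpolys} as the main engine to peel off one generator at a time. The base case $\ell=0$ is immediate: $\IndexSet{A}{0}=\{0\}$ and $\nu_0=1$, so $f = f\cdot \nu_0$ already lies in $\nu_0 D[X]$ and $0 \in \IndexSet{A}{0}^{[f]}$ trivially (for $f\neq 0$).

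For the inductive step, fix $f \in \NullIovD{p^{\ell}}(A)$ and apply Proposition~\ref{prop:decompositionofnullpolys} to write $f = q\nu_{\ell} + pg$ with $\deg(g) < \deg(\nu_{\ell})$; note that $pg = f - q\nu_{\ell} \in \NullIovD{p^{\ell}}$ forces $g \in \NullIovD{p^{\ell-1}}$. I split into two cases depending on whether $\ell$ belongs to $\IndexSet{A}{\ell}^{[f]}$. If $\deg(\nu_{\ell}) \leq \deg(f)$, then $\ell \in \IndexSet{A}{\ell}^{[f]}$ and the term $q\nu_{\ell} = p^{\ell-\ell}\nu_{\ell} q$ is already of the required form; we then apply the induction hypothesis to $g$ (which satisfies $\deg(g) < \deg(\nu_{\ell}) \leq \deg(f)$) to express $pg$ via indices in $\IndexSet{A}{\ell-1}^{[g]}$. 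If instead $\deg(\nu_{\ell}) > \deg(f)$, a leading-term comparison forces $q=0$ (because $\deg(pg) < \deg(\nu_{\ell})$ cannot cancel the leading term of $q\nu_{\ell}$), so $f = pg$ with $\deg(g) = \deg(f)$, and the induction hypothesis applied to $g$ again gives the representation.

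The crux is then verifying the inclusion $\IndexSet{A}{\ell-1}^{[g]} \subseteq \IndexSet{A}{\ell}^{[f]}$ in both cases, which I expect to be the only delicate point. For each $i \in \IndexSet{A}{\ell-1}^{[g]}$ we have $\deg(\nu_i) \leq \deg(g)$, and in both cases $\deg(g) \leq \deg(f)$, so the degree condition transfers. The subtle part is membership in $\IndexSet{A}{\ell}$: by Remark~\ref{rem:propofellindexset}(1) there is nothing to check, while in case (2), where $\IndexSet{A}{\ell} = \{\ell\} \cup (\IndexSet{A}{\ell-1}\setminus\{\ell-1\})$, one must exclude $i = \ell-1$. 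This is exactly where the degree bookkeeping pays off: if $i = \ell-1$, then $\deg(\nu_i) = \deg(\nu_{\ell-1}) = \deg(\nu_{\ell})$, but in the first case $\deg(\nu_i) \leq \deg(g) < \deg(\nu_{\ell})$, and in the second case $\deg(\nu_i) \leq \deg(g) = \deg(f) < \deg(\nu_{\ell})$, both contradictions. Hence $i \neq \ell - 1$ and the inclusion holds, closing the induction.
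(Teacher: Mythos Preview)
Your proof is correct and follows essentially the same inductive strategy as the paper: induct on $\ell$, reduce to level $\ell-1$ via Proposition~\ref{prop:decompositionofnullpolys}, and use Remark~\ref{rem:propofellindexset} to verify the index-set inclusion $\IndexSet{A}{\ell-1}^{[g]} \subseteq \IndexSet{A}{\ell}^{[f]}$. The only structural difference is that in the case $\deg(f) \geq \deg(\nu_{\ell})$ the paper simply invokes Theorem~\ref{thm:MinGenNullI} (since then $\IndexSet{A}{\ell}^{[f]}=\IndexSet{A}{\ell}$), whereas you still decompose $f=q\nu_{\ell}+pg$ and apply the induction hypothesis to $g$; this makes your argument marginally more self-contained but otherwise identical.
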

% ----------------------------------------------------------------------------------------------------------------------
\begin{proof}
We prove this by induction on $\ell$. Observe that, if $\deg(f) \geq \deg(\nu_{\ell})$, then $\IndexSet{A}{\ell}^{[f]}=\IndexSet{A}{\ell}$. In this case the assertion holds, according to Theorem~\ref{thm:MinGenNullI}. In particular, this is the case if $\ell = 0$ (which is the induction basis), since $\deg(f) \geq 0 = \deg(\nu_{0})$. 

Hence assume $\ell\geq 1$ and $\deg(f) < \deg(\nu_{\ell})$.  Then  $\ell\notin \IndexSet{A}{\ell}^{[f]}$, and, 
by Corollary~\ref{cor:degreeandppower}, $f=ph$ with $h \in \NullIovD{p^{\ell-1}}$.
According to the induction hypothesis, it follows that 
\begin{align*}
  h \in \sum_{i\in \IndexSet{A}{\ell-1}^{[h]}} p^{\ell-1-i}\nu_i\,D[X]. %+p^{\ell-1}D[X]
\end{align*}
Note that $\deg(f) = \deg(h)$ and therefore $\IndexSet{A}{\ell-1}^{[h]} = \IndexSet{A}{\ell-1}^{[f]}$.
We split into two cases, $\deg(\nu_{\ell}) > \deg(\nu_{\ell-1})$ and $\deg(\nu_{\ell}) = \deg(\nu_{\ell-1})$. 
According to Remark~\ref{rem:propofellindexset}, if $\deg(\nu_{\ell}) > \deg(\nu_{\ell-1})$, then $\IndexSet{A}{\ell-1} \cup \{\ell\} = \IndexSet{A}{\ell}$. Since $\ell \notin \IndexSet{A}{\ell}^{[f]}$ it follows that $\IndexSet{A}{\ell-1}^{[f]} = \IndexSet{A}{\ell}^{[f]}$.

If $\deg(\nu_{\ell}) = \deg(\nu_{\ell-1})$, then $\IndexSet{A}{\ell} = \{\ell\}\,\cup\, (\IndexSet{A}{\ell-1}\setminus\{\ell-1\})$, by Remark~\ref{rem:propofellindexset} again. 
However, $\ell\notin \IndexSet{A}{\ell}^{[f]}$ and $\ell-1\notin \IndexSet{A}{\ell-1}^{[f]}$ since $\deg(f) < \deg(\nu_{\ell}) = \deg(\nu_{\ell-1})$. Therefore $\IndexSet{A}{\ell-1}^{[f]} = \IndexSet{A}{\ell}^{[f]}$ in this case too.
Hence, in both cases, the following holds:
\begin{align*}
  f= ph \in \sum_{i\in \IndexSet{A}{\ell}^{[f]}} p^{\ell-i} \nu_i\,D[X]. 
\end{align*}
\end{proof}
% ----------------------------------------------------------------------------------------------------------------------

For $i\geq 1$, let $\nu_i\in D[X]$ be $(p^i)$-minimal polynomials and $\mu_A\in D[X]$ the minimal polynomial of $A$. Then, by definition, 
\begin{align*}
 \pdeg = \deg(\nu_1) \leq   \cdots \leq \deg(\nu_{\ell-1}) \leq \deg(\nu_{\ell}) \leq \cdots \leq \deg(\mu_A) = \mathsf{d}_A.
\end{align*}
In particular, this sequence of degrees stabilizes. The following proposition states that there always exists an $m$ such that every $(p^m)$-minimal polynomial has degree $\mathsf{d}_A$, that is, the sequence stabilizes always at the value $\mathsf{d}_A$.
% ----------------------------------------------------------------------------------------------------------------------
% ----------------------------------------------------------------------------------------------------------------------
\begin{Pro}\label{pro:degstabatell}
 Let $D$ be a principal ideal domain and $p\in D$ a prime element. Further, let $A\in \Matrn{D}$ with 
 minimal polynomial $\mu_A\in D[X]$ and $\mathsf{d}_A = \deg(\mu_A)$. 
 If $\nu_i$ are $(p^i)$-minimal polynomials of $A$ for $i\geq 0$, then there exists $m\in \N$ such that for all $\ell\geq m$, $\deg(\nu_{\ell})=\mathsf{d}_A$ holds.
\end{Pro}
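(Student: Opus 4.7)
The plan is to show that the sequence $(\deg\nu_\ell)_{\ell\geq 0}$ stabilizes at some $d\leq \mathsf{d}_A$ and then derive a contradiction from the assumption $d<\mathsf{d}_A$ via a $(p)$-adic completion argument.

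First, observe that the sequence is non-decreasing (every $(p^\ell)$-minimal polynomial is a monic element of $\NullIovD{p^{\ell-1}}(A)$, so its degree is at least $\deg\nu_{\ell-1}$) and bounded above by $\mathsf{d}_A$ (since $\mu_A \in \NullIovD{p^\ell}(A)$ is monic of degree $\mathsf{d}_A$ for every $\ell$). Hence there exist $m\in\N$ and $d\leq \mathsf{d}_A$ with $\deg\nu_\ell = d$ for all $\ell \geq m$. The remaining task is to show $d=\mathsf{d}_A$, and I would argue by contradiction, assuming $d<\mathsf{d}_A$.

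The key step is to show that $(\nu_\ell)_{\ell\geq m}$ is a $(p)$-adic Cauchy sequence in $D[X]$. For $\ell'\geq \ell\geq m$, both $\nu_\ell$ and $\nu_{\ell'}$ are monic of degree $d$, so $\deg(\nu_{\ell'}-\nu_\ell)<d = \deg\nu_m$; moreover $(\nu_{\ell'}-\nu_\ell)(A)\in p^\ell\Matrn{D}$ since both $\nu_\ell(A)$ and $\nu_{\ell'}(A)$ lie in $p^\ell\Matrn{D}$. Corollary~\ref{cor:degreeandppower} applied with $j=m$ then yields
\begin{align*}
 \nu_{\ell'}-\nu_\ell \;\in\; p^{\ell-m+1}D[X],
\end{align*}
so each coefficient sequence of $(\nu_\ell)_{\ell\geq m}$ is Cauchy in the $(p)$-adic topology on $D$.

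Finally, I would pass to the $(p)$-adic completion $\widehat{D}_p$ of $D$, a complete DVR with quotient field $\widehat{K}_p$. The coefficient-wise limits assemble into a monic polynomial $\nu_\infty \in \widehat{D}_p[X]$ of degree $d$. Continuity of polynomial evaluation together with $\nu_\ell(A)\in p^\ell\Matrn{D}\to 0$ forces $\nu_\infty(A)=0$ in $\Matrn{\widehat{D}_p}$. However, $I, A, \ldots, A^{\mathsf{d}_A-1}$ are $K$-linearly independent in $\Matrn{K}$ by minimality of $\mu_A$ over the quotient field $K$ of $D$, and they remain $\widehat{K}_p$-linearly independent after the flat scalar extension $K \hookrightarrow \widehat{K}_p$. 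Consequently no monic polynomial in $\widehat{K}_p[X]$ of degree below $\mathsf{d}_A$ annihilates $A$, contradicting $\deg\nu_\infty = d < \mathsf{d}_A$. The main delicate point is the completion argument, specifically justifying that linear independence is preserved when passing from $K$ to $\widehat{K}_p$; the Cauchy claim itself is an immediate consequence of Corollary~\ref{cor:degreeandppower}, which does the real work.
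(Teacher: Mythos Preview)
Your argument is correct and essentially identical to the paper's proof: both stabilize the degree sequence, use Corollary~\ref{cor:degreeandppower} to show the $\nu_\ell$ form a $p$-adic Cauchy sequence, pass to the completion $\widehat{D}$ to obtain a monic annihilator of degree $d$, and then invoke invariance of the minimal polynomial under the field extension $K\hookrightarrow \widehat{K}$ (which is exactly your preservation-of-linear-independence statement) to force $d=\mathsf{d}_A$. The only cosmetic differences are that the paper compares consecutive terms $\nu_{m+k+1}-\nu_{m+k}$ rather than arbitrary pairs and phrases the conclusion as $\mu_A\mid\nu$ rather than as a contradiction.
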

% ----------------------------------------------------------------------------------------------------------------------
\begin{proof}
Since $\deg(\nu_i)\leq \deg(\mu_A)$ and $(\deg(\nu_i))_{i\geq 1}$ is a non-decreasing sequence in \N, there exists $m\in \N$ such that  $\deg(\nu_{m}) = \deg(\nu_{m+k})$ for all $k\geq 0$. We set $d= \deg(\nu_m)$ and show $d = \mathsf{d}_A$. Note that $d\leq \mathsf{d}_A$, and therefore it suffices  to show $d\geq \mathsf{d}_A$. 

Since $\nu_{m+k + 1} - \nu_{m+k}\in \NullIovD{p^{m+k}}$ is a polynomial with degree less than $\deg(\nu_m)$, it follows from Corollary~\ref{cor:degreeandppower} that
\begin{align*}
 \nu_{m+k + 1} - \nu_{m+k}\in p^{k+1}D[X].
\end{align*}
For $0\leq i \leq d$, let $a_{i}^{(k)}$ be the coefficient of $X^i$ of the polynomial $\nu_{m+k}$. 
Then $(a_{i}^{(k)})_{k\geq 0}$ are $p$-adic Cauchy sequences in $D$. 
Therefore $\nu = \lim_{k\rightarrow \infty} \nu_{m+k}$ is a polynomial over the $p$-adic completion $\cp{D}$ of $D$ 
with coefficients $a_i=\lim_{k\rightarrow \infty} a_i^{(k)}$  and $d=\deg(\nu)$.
Since,  $\nu_{m+k}$ is a monic polynomial for all $k$, it follows that $\nu$ is a monic polynomial too.

Further $\nu(A) = 0$, and hence $\nu\in \NullI[\cp{D}](A)$.
Now, let \cp{K} be the quotient field of \cp{D}. Then $\cp{K}$ is a field extension of $K$. Since the minimal polynomial is invariant under field extensions,  it follows that $\NullI[\cp{K}](A) = \mu_A \cp{K}[X]$. However, $\cp{D}$ is integrally closed in $\cp{K}$, and therefore  $\NullI[\cp{D}](A) = \mu_A \cp{D}[X]$.
Hence $\mu_A \,|\, \nu$ which implies in particular that $\mathsf{d}_A \leq \deg(\nu) = d$.
\end{proof}
% ----------------------------------------------------------------------------------------------------------------------
% ----------------------------------------------------------------------------------------------------------------------

We can conclude, that it suffices to determine a finite number of $(p^i)$-minimal polynomials in order to describe the ideals $\NullIovD{p^{\ell}}(A)$ for all $\ell\geq 0$.
% ----------------------------------------------------------------------------------------------------------------------
% ----------------------------------------------------------------------------------------------------------------------
\begin{Cor}\label{cor:mlargeenough}
Let $D$ be a principal ideal domain and $p\in D$ a prime element. Further, let $A\in \Matrn{D}$ 
and $\mu_A\in D[X]$ the minimal polynomial of $A$. 
%with $(p^i)$-minimal polynomials $\nu_i$ for $i\geq 0$. 
Then there exists $m\in \N$ such that for all $k\geq 0$ the following holds:
\begin{align*}
\NullIovD{p^{m+k}}(A) = \mu_A D[X] + p^{k}\NullIovD{p^{m}}(A).
\end{align*}
\end{Cor}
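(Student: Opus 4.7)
The plan is to use Proposition~\ref{pro:degstabatell} to pick $m$ large enough that $\deg(\nu_\ell) = \mathsf{d}_A$ for every $\ell \geq m$, and then to compare the generating sets furnished by Theorem~\ref{thm:MinGenNullI} for $\NullIovD{p^{m+k}}(A)$ and $\NullIovD{p^{m}}(A)$. The case $k=0$ is immediate because $\mu_A \in \NullIovD{p^m}(A)$, so I only need to handle $k\geq 1$.

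The first move is to observe that for any $\ell \geq m$, the polynomial $\mu_A$ itself is a valid choice of $(p^\ell)$-minimal polynomial: it is monic, annihilates $A$ (hence lies in $\NullIovD{p^\ell}(A)$), and has degree $\mathsf{d}_A = \deg(\nu_\ell)$. So I may assume $\nu_\ell = \mu_A$ for all $\ell \geq m$. Next I analyze the index set $\IndexSet{A}{m+k}$ from Definition~\ref{def:indexset}. For indices $i$ with $m \leq i < m+k$ we have $\deg(\nu_i) = \deg(\nu_{i+1}) = \mathsf{d}_A$, so such $i$ are excluded; and $m+k$ is always included. For $i < m$, membership in $\IndexSet{A}{m+k}$ only depends on the comparison $\deg(\nu_i)$ vs.\ $\deg(\nu_{i+1})$, which is exactly the condition defining membership in $\IndexSet{A}{m} \setminus \{m\}$. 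Therefore
\begin{align*}
\IndexSet{A}{m+k} = \bigl(\IndexSet{A}{m}\setminus\{m\}\bigr) \cup \{m+k\}.
\end{align*}

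Now applying Theorem~\ref{thm:MinGenNullI} twice and splitting the sum at the index $m$:
\begin{align*}
\NullIovD{p^{m+k}}(A)
&= \mu_A\,D[X] + \sum_{i\in \IndexSet{A}{m}\setminus\{m\}} p^{m+k-i}\nu_i\,D[X] \\
&= \mu_A\,D[X] + p^k \sum_{i\in \IndexSet{A}{m}\setminus\{m\}} p^{m-i}\nu_i\,D[X].
\end{align*}
On the other hand,
\begin{align*}
\NullIovD{p^{m}}(A) = \mu_A\,D[X] + \sum_{i\in \IndexSet{A}{m}\setminus\{m\}} p^{m-i}\nu_i\,D[X],
\end{align*}
so multiplying by $p^k$ and adding $\mu_A D[X]$ (which absorbs $p^k\mu_A D[X]$) gives the same right-hand side. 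The equality in the corollary follows.

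The only subtle point, which I would flag as the main thing to get right, is the bookkeeping of $\IndexSet{A}{m+k}$: one has to verify both that no index in $\{m, m+1,\ldots,m+k-1\}$ survives (which uses the degree stabilization) and that the surviving indices below $m$ are exactly $\IndexSet{A}{m}\setminus\{m\}$ (which is a matter of unfolding the definition). Once this identification of index sets is in hand, the rest of the argument is just a direct computation with the generators supplied by Theorem~\ref{thm:MinGenNullI}.
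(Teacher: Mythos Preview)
Your proof is correct, but the paper takes a shorter route. After invoking Proposition~\ref{pro:degstabatell} to obtain $m$ with $\deg(\nu_{m+1})=\deg(\mu_A)$, the paper simply applies Corollary~\ref{cor:degreeandppower} with $\ell=m+k$ and $j=m+1$: since $\deg(\nu_{m+k})=\deg(\nu_{m+1})$ and $\mu_A$ serves as a $(p^{m+k})$-minimal polynomial, that corollary gives $\NullIovD{p^{m+k}}(A)=\mu_AD[X]+p^{k}\NullIovD{p^{m}}(A)$ in one line. Your argument instead goes through Theorem~\ref{thm:MinGenNullI} and an explicit comparison of the index sets $\IndexSet{A}{m+k}$ and $\IndexSet{A}{m}$. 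This is perfectly valid and the index-set identity $\IndexSet{A}{m+k}=(\IndexSet{A}{m}\setminus\{m\})\cup\{m+k\}$ is established correctly, but it essentially re-derives the relevant instance of Corollary~\ref{cor:degreeandppower} by hand. The paper's approach is more economical; yours has the minor benefit of making the structure of the generating set visible at the level of index sets, which ties in nicely with the later module-theoretic analysis in Section~\ref{sec:modulestructure}.
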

% ----------------------------------------------------------------------------------------------------------------------
\begin{proof}
For $i\ge 0$, let $\nu_i$ be a $(p^i)$-minimal polynomial of $A$. 
Then there exists an $m\in\N$ such that $\deg(\mu_A) = \deg(\nu_{m+1})$, according to Proposition~\ref{pro:degstabatell}. Hence, $\mu_A$ is a $(p^{m+k+1})$-minimal polynomial for all $k\geq 0$ and the assertion follows from Corollary~\ref{cor:degreeandppower} (with $j = m+1$).
\end{proof}
% ----------------------------------------------------------------------------------------------------------------------
% ----------------------------------------------------------------------------------------------------------------------

\subsection{Diagonal matrices}

Although we know that  $(p^{\ell})$-minimal polynomials exist, it is in general not clear how to determine them algorithmically. However, in the special case of diagonal matrices it is possible to compute them explicitly. Let $A = \diag(a_1,\ldots,a_n)$ be a diagonal matrix over $D$, $p\in D$ a prime element, $\ell\in \N$ and $f\in D[X]$ a polynomial. Then $f(A) = \diag(f(a_1),\ldots,f(a_n))$ holds and therefore 
\begin{align*}
\forall\, f\in D[X]: \; \left( f \in \NullIovD{p^{\ell}}(A) \quad \Longleftrightarrow \quad \forall\,i \in \{1\,\ldots, n\}:\; f(a_i) \in p^{\ell}D \right).
\end{align*}
However, the set of polynomials which maps the elements \textenum{a_1}{a_n} to multiples of $p^{\ell}$ can be determined using Bhargava's $p$-orderings, cf.~\cite{Bhargava97} and~\cite{Bhargava2000}. We explain his approach here in the special case of a principal ideal domain (although it is applicable in the more general case of a Dedekind domain by looking at prime ideals instead of  prime elements).
\begin{Def}
  Let $S$ be a non-empty subset $S$ of $D$. A \df{$p$-ordering of $S$} is a sequence $(b_k)_{k\geq 0}$ which is defined iteratively in the following way:
\begin{enumerate}
 \item Choose $b_0\in S$ arbitrary.
 \item If \textenum{b_0}{b_{k-1}} are already known, then choose $b_k\in S$ as an element such that $\val_p((b_k-b_{0})(b_k-b_{1})\cdots(b_k-b_{k-1}))$ is minimal, where $\val_p$ denotes the $p$-adic valuation on $D$.
\end{enumerate}
\end{Def}
In general, there is more than one $p$-ordering of a set $S$ (except $|S| = 1$) and
for each $p$-ordering $(b_k)_{k\geq 0}$ of $S$ we have the sequence  of $p$ powers $p^{\val_p((b_k-b_{0})(b_k-b_{1})\cdots(b_k-b_{k-1}))}$ (with the usual convention ``$p^{\infty} = 0$''). 
 Bhargava shows that the sequences of $p$ powers  of any two $p$-orderings are the same (cf.~\cite[Theorem~1]{Bhargava97}). Hence, these $p$ powers depend only on $S$ and not on the choice of the $p$-ordering. This motivates the following definition.
\begin{Def}
 Let $S$ be a non-empty subset $S$ of $D$ and $(b_k)_{k\geq 0}$ a $p$-ordering of $S$. For $k\geq 0$ let 
\begin{align*}
 v_k(S,p) = p^{\val_p((b_k-b_{0})(b_k-b_{1})\cdots(b_k-b_{k-1}))}D.
\end{align*}
Then $(v_k(S,p))_{k\geq 0}$ is called the \df{associated $p$-sequence of $S$}.
\end{Def}

Note that $v_0(S,p) = D$. By definition, $p$-orderings satisfy the following property
\begin{align}\label{eq:bhargava}
 \forall\, a\in S:\; p^{\val_p((a-b_{0})(a-b_{1})\cdots(a-b_{k-1}))} \in v_k(S,p). 
\end{align}
Therefore, the associated $p$-sequence of $S$ forms a descending chain of ideals, that is, $v_{k+1}(S,p) \subseteq v_k(S,p)$ for all $k\geq 0$. In particular, if $S$ is finite, then $v_{k}(S,p) = \Nl$ for $k\geq |S|+1$. Moreover, the property in~\eqref{eq:bhargava} implies that the polynomials of the form $f_k = (X-b_{0}) \cdots (X-b_{k-1})$ satisfy $f_k(S) \subseteq  v_k(S,p)$ for $k\geq 0$. In fact, the polynomials $f_k$ are indeed a suitable choice for our purpose. The following theorem allows us to deduce the desired properties.

\begin{Thm}{(\cite[Theorem~11]{Bhargava97})}\label{thm:bhargava}
 Let $S$ be a subset of a principal ideal domain $D$, and $f\in D[X]$ be a primitive polynomial of degree $k$. If $I_f$ denotes the smallest ideal of $D$ such that $f(S)\subseteq I_f$, then $v_k(S,p) \subseteq I_f$. Moreover, if $(b_j)_{j\geq 0}$ is a $p$-ordering of $S$, then the polynomial
 \begin{align*}
  g = (X-b_{0})(X-b_{1}) \cdots (X-b_{k-1}) 
 \end{align*}
 is a polynomials of degree $k$ such that $I_g = v_k(S,p)$.
\end{Thm}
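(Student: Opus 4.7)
The plan is to expand $f$ in the ``falling factorial'' basis associated to a $p$-ordering $(b_j)_{j\geq 0}$ of $S$, writing $f(X) = \sum_{j=0}^{k} d_j(X-b_{0})(X-b_{1})\cdots(X-b_{j-1})$ for uniquely determined $d_j\in D$. The change-of-basis matrix between $(X^j)_{j=0}^k$ and $((X-b_0)\cdots(X-b_{j-1}))_{j=0}^k$ is upper triangular with $1$'s on the diagonal and $D$-entries, hence unimodular; thus $\gcd(d_0,\ldots,d_k)$ agrees, up to units, with the content of $f$. Primitivity of $f$ therefore forces at least one $d_j$ to be a unit modulo $p$.

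For the ``moreover'' part, set $g = (X-b_0)(X-b_1)\cdots(X-b_{k-1})$. Property~\eqref{eq:bhargava} of the $p$-ordering yields $g(s) \in v_k(S,p)$ for every $s\in S$, so $I_g \subseteq v_k(S,p)$. The reverse inclusion is immediate from $s = b_k$, which by definition realises the exponent generating $v_k(S,p)$; hence $I_g = v_k(S,p)$.

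For the harder inclusion $v_k(S,p) \subseteq I_f$, let $e_j\in\No$ satisfy $v_j(S,p) = p^{e_j}D$. First I would verify the monotonicity $e_0\leq e_1\leq\cdots\leq e_k$: for $j<k$, factoring the product $(b_k-b_0)\cdots(b_k-b_{k-1})$ as $[(b_k-b_0)\cdots(b_k-b_{j-1})]\cdot[(b_k-b_j)\cdots(b_k-b_{k-1})]$ yields $e_k \geq \val_p((b_k-b_0)\cdots(b_k-b_{j-1})) \geq e_j$, using $\val_p\geq 0$ on $D$ and $b_k\in S$. Next, I would assume for a contradiction that $\val_p(f(s)) \geq e_k + 1$ for every $s\in S$, and evaluate the falling-factorial expansion at $s = b_\ell$ for $\ell = 0,1,\ldots,k$. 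Because $b_\ell - b_\ell = 0$, every term with $j>\ell$ vanishes, producing a triangular system. An induction on $\ell$, combining the ultrametric inequality with the \emph{equality} $\val_p((b_\ell-b_0)\cdots(b_\ell-b_{\ell-1})) = e_\ell$ (from the $p$-ordering choice of $b_\ell$), forces $\val_p(d_\ell)\geq e_k+1-e_\ell\geq 1$. Hence $p\mid d_j$ for every $j$, contradicting primitivity.

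The main obstacle is this triangular induction: at stage $\ell$ one must separate the diagonal contribution $d_\ell(b_\ell-b_0)\cdots(b_\ell-b_{\ell-1})$ from the lower-index terms via the ultrametric inequality, and this separation relies on the \emph{exact} value $e_\ell$ of its $\val_p$, not merely a lower bound. Converting the resulting individual bounds on the $d_j$ into a contradiction then relies on the monotonicity of $(e_j)$; these two ingredients together are what make the primitivity hypothesis on $f$ do its work.
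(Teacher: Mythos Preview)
The paper does not supply its own proof of this statement: Theorem~\ref{thm:bhargava} is quoted verbatim from Bhargava's paper (\cite[Theorem~11]{Bhargava97}) and used as a black box in the discussion of diagonal matrices. There is therefore no in-paper argument to compare yours against.

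That said, your sketch is essentially Bhargava's own proof: expand $f$ in the falling-factorial basis attached to a $p$-ordering, use the unimodular change of basis to transfer primitivity to the coefficients $d_j$, and then run the triangular induction at the points $b_0,\ldots,b_k$ to force $p\mid d_j$ for all $j$ under the contradictory hypothesis. The two ingredients you isolate as critical---the \emph{exact} equality $\val_p\bigl((b_\ell-b_0)\cdots(b_\ell-b_{\ell-1})\bigr)=e_\ell$ and the monotonicity $e_0\le\cdots\le e_k$---are precisely the ones Bhargava uses.

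One small caveat on the ``moreover'' clause: your sentence ``the reverse inclusion is immediate from $s=b_k$'' only shows that $I_g$ contains an element of $p$-valuation exactly $e_k$; it does not by itself give $p^{e_k}\in I_g$ over $D$, since $g(b_k)=p^{e_k}u$ with $u$ merely coprime to $p$, not a unit in $D$. What your argument (and Bhargava's) really establishes is the $p$-local statement: the $p$-part of $I_g$ equals $v_k(S,p)$, and more generally the $p$-part of $I_f$ divides $v_k(S,p)$. This $p$-local content is exactly what the paper needs and uses when extracting $(p^\ell)$-minimal polynomials of diagonal matrices, so the discrepancy is one of phrasing rather than substance.
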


We can use this theorem to compute $(p^{\ell})$-minimal polynomials for the diagonal matrix $A = \diag(a_1,\ldots,a_n)$ over principal ideal domains. Let $S = \{a_1,\ldots,a_n\}$ be the set of diagonal elements of $A$ and $\sigma$ a permutation of $\{1,\ldots,n\}$ such that $(a_{\sigma(i)})_{i=1}^n$ is a $p$-ordering of $S$.
We set $f_k = (X-a_{\sigma(0)})(X-a_{\sigma(1)}) \cdots (X-a_{\sigma(k-1)})$.

For $\ell\in \N$, let $k$ be minimal such that $v_k(S,p) \subseteq p^{\ell}D$. Then, by Theorem~\ref{thm:bhargava}, $f_k(S) \subseteq p^{\ell}D$ and we claim that $f_k$ is a $(p^{\ell})$-minimal polynomial. Assume that $f\in D[X]$ is a monic polynomial with degree less than $k$ and $f(S)\subseteq p^{\ell}D$. Again by Theorem~\ref{thm:bhargava}, this implies $v_{k-1}(S,p)\subseteq I_f\subseteq p^{\ell}D$ which contradicts the choice of $k$.

To compute the $(p^{\ell})$-minimal polynomial of $A$ we therefore only have to compute a $p$-ordering of the set of diagonal elements of $A$.
To demonstrate this approach, we conclude this section with an example of a $3$$\times$$3$-matrix over \Z.
\begin{Exa}\label{ex:genset}
Let $A\in\Matr{3}{\Z}$ be defined as follows:
\begin{align*}
 A = %
   \begin{pmatrix}
       4 &    0  & 0  \\
       0  &  16  & 0  \\
       0  &   0  & 32 \\
   \end{pmatrix}
\end{align*}
Then $A$ has three, pairwise different eigenvalues over \Q and hence 
\begin{align*}
\mu_A = (X-4)(X-16)(X-32)
\end{align*}
is the minimal polynomial of $A$ over $\Q$. Since $\mu_A\in \Z[X]$, it is the (in this case uniquely determined) minimal polynomial (or \Nl-minimal polynomial) of $A$ over \Z. 

Let $p\in \Z$ be a prime element. Recall that we denote the residue classes modulo a prime element $p$ by  \Resp{\,.\,}.
Then $\Resp{A}$ has three different eigenvalues in \FacRing{\Z}{p}{} for all prime elements in \Z except for the primes $2$, $3$ and $7$.
Therefore, 
\begin{align*}
\mu_{\Resp{A}} = (X-\Resp{4})(X-\Resp{16})(X-\Resp{32}) \in \FacRing{\Z}{p}{}[X]
\end{align*}
is the minimal polynomial of $\Resp{A}$ over \FacRing{\Z}{p}{} for all $p\in \Primes\setminus\{2,3,7\}$. This implies $\pdeg(A) = \deg(\mu_A)$ for all $p\in  \Primes \setminus \{2,3,7\}$.
Therefore $\mu_A$ is a $(p^{\ell})$-minimal polynomial of $A$ and $\{0,\ell\}$ the $\ell$-th index set of $A$ with respect to~the prime $p$
for all prime elements $p\neq 2,3,7$ and all $\ell\geq 1$. Hence, according to Theorem~\ref{thm:MinGenNullI}, 
\begin{align*}
 \NullIovD{p^{\ell}}(A) = \mu_A\Z[X] + p^{\ell}\Z[X]
\end{align*}
holds for all $p\in \Primes\setminus\{2,3,7\}$ and all $\ell\geq 1$.
The cases $p=3$ and $p=7$ are similar, therefore, we only handle $p=3$.
Observe that $4,32,16,16,\ldots$ is an example of a $3$-ordering of the set $\{4,16,32\}$ and $D,D,(3),\Nl,\Nl,\ldots$ is the associated $3$-sequence of this set.
Following Bhargava's approach (which we explained above this example), it follows that 
% 
% \begin{align*}
 $f_2 = (X-4)(X-32)$
% \end{align*}
% 
is a $(3)$-minimal polynomial and 
% \begin{align*}
 $\mu_A = f_3 = (X-4)(X-32)(X-16)$
% \end{align*}
is a $(3^{\ell})$-minimal polynomial $\ell \geq 2$. Moreover, $\{0,1\}$ is the first and $\{0,1,\ell\}$ is the $\ell$-th index set of $A$ for $\ell \geq 2$ (with respect to~$3$).
Theorem~\ref{thm:MinGenNullI} implies
\begin{align*}
 \NullIovD{3}(A) = (X-4)(X-32)\,\Z[X] + 3\,\Z[X] 
\end{align*}
and, for all $\ell\geq 2$,
\begin{align*}
\NullIovD{3^{\ell}}(A) = \mu_A\Z[X] +3^{\ell-1}(X-4)(X-32)\,\Z[X] + 3^{\ell}\,\Z[X].
\end{align*}

It remains to consider the case $p=2$. The sequence $4,16,32,32,\ldots$ is an example of a $2$-ordering of the set $\{4,16,32\}$ and $D,(4),(64),\Nl,\Nl,\ldots$ is the associated $2$-sequence of this set. We use Bhargava's approach again; the results are displayed in Table~\ref{tab:exa_p2}.

% \clearpage

\renewcommand{\arraystretch}{1.1}
\begin{table}[h!]
\begin{center}
 \begin{tabular}{|c|c|c|}
\hline
  $\ell$ &  \IndexSet{A}{\ell} & $(2^{\ell})$-minimal polynomial \\
\hline
\hline
 1,2 &  $\{0,\ell\}$ & $X-4$ \\
\hline
 3,4,5,6 & $\{0,2,\ell\}$ & $(X-4)(X-16)$ \\
\hline
$\geq 7$ & $\{0,2,5,\ell\}$ & $\mu_A$ \\
\hline
 \end{tabular}
\caption{$(2^{\ell})$-minimal polynomials of $A$}
\label{tab:exa_p2}
\end{center}
\end{table}

Finally, it is worth mentioning that even if the degrees of $(p^{\ell})$- and $(p^{\ell+1})$-minimal polynomials coincide, a $(p^{\ell})$-minimal polynomials is in general \textbf{not} a $(p^{\ell+1})$-minimal polynomial (while the reverse implication holds).
This is easily verified, once one observes that $X^2$ is both, an $(8)$- and a $(16)$-minimal polynomial, but it is not a $(32)$-minimal polynomial of $A$.

\end{Exa}

\section{\texorpdfstring{Module structure of $\FacRing{D}{p}{\ell}[A]$}{Module structure}}
\label{sec:modulestructure}

\newcommand{\Nld}{\ensuremath{\NullI[<d]}\xspace}
\newcommand{\Rld}{\ensuremath{\Rl[X]^{<d}}\xspace}

Throughout this section we fix the prime power $p^{\ell}\in D$ and write $\Rl$ for the residue class ring $\FacRing{D}{p}{\ell}$.
Let $A\in \Matrn{\Rl}$ be a square matrix with null ideal 
\begin{align*}
\NullI = \NullI[\Rl](A) = \NullIovD[\Rl]{\Nl}(A) = \{\,f\in \Rl[X] \mid f(A) =0\,\}. 
\end{align*}

Further, let $A'\in \Matrn{D}$ be a preimage of $A$ under the projection modulo $p^{\ell}$, that is, $\Res{A'}{p^{\ell}} = A$ where \Res{\,.\,}{p^{\ell}} denotes the residue class modulo $p^{\ell}$ (as introduced in~Notation and Conventions~\ref{conv:pid}). Then, according to Theorem~\ref{thm:MinGenNullI}, 
\begin{align*}
 \NullI &= \{\,\Res{f}{p^{\ell}}\in \Rl[X] \mid f\in \NullIovD{p^{\ell}}(A')\,\} 
          = \sum_{i\in \IndexSet{A}{\ell}\setminus \{0\}} \Res{p}{p^{\ell}}^{\ell-i}\Res{\nu_i}{p^{\ell}}\Rl[X]
\end{align*}
where \IndexSet{A}{\ell} is the $\ell$-th index set of $A'$ and $\nu_i$ are $(p^i)$-minimal polynomials of $A'$ (for $i\in \IndexSet{A}{\ell}\setminus \{0\} $).

% --------------------------------------------------------------------------------------------------------------
\begin{NotConv}\label{def:pjRlminpoly}
 Let  $f'\in D[X]$ be a monic polynomial. Recall that, for $1\leq j\leq \ell$, $f'$ is a $(p^j)$-minimal polynomial of $A'$ if and only if $f=\Res{f'}{p^{\ell}}$ is a $(\Res{p^j}{p^{\ell}})$-minimal polynomial of $A$, see Remark~\ref{rem:defminpoly_ovD}. 

 For a better readability, we often write $p$ for the residue class \Res{p}{p^{\ell}} of $p$ modulo $p^{\ell}$ and say that $f\in \Rl[X]$ is a $(p^j)$-minimal polynomial of $A$ if it is a $(\Res{p^j}{p^{\ell}})$-minimal polynomial of $A$.
\end{NotConv}
% --------------------------------------------------------------------------------------------------------------

 Note that the $\ell$-th index set of a matrix $A'\in \Matrn{D}$ only depends on the residue class of $A'$ modulo $p^{\ell}$, that is, if $A''\in \Matrn{D}$ is a matrix with $\Res{A'}{p^{\ell}} = \Res{A''}{p^{\ell}}$ (and therefore $\Res{A'}{p^{j}} = \Res{A''}{p^{j}}$ for all $1\leq j\leq \ell$), then $A'$ and $A''$ have equal $\ell$-th index sets, cf.~Remark~\ref{rem:index_set_resclass}.
% --------------------------------------------------------------------------------------------------------------
\begin{Def}\label{def:reducedindexset_successor}
 Let $A\in \Matrn{\Rl}$ and $A'\in \Matrn{D}$ such that $A=\Res{A'}{p^{\ell}}$. If  $\IndexSet{A}{\ell}$ is the $\ell$-th index set of $A'$, then we call $\SIndexSet{A}{\ell} = \IndexSet{A}{\ell}\setminus \{0,\ell\}$ the \df{reduced index set of} $A$. 
 Further, for $i\in \IndexSet{A}{\ell} \setminus \{\ell\}$, we call $\nf{i}=\min\{i'\in \IndexSet{A}{\ell}\mid i'>i\}$  the \df{successor of $i$ in \IndexSet{A}{\ell}}.
\end{Def}
% --------------------------------------------------------------------------------------------------------------
\begin{Rem}\label{rem:reduced_index_set}
 Let $A\in \Matrn{\Rl}$ with reduced index set \SIndexSet{A}{\ell}, and let $\nu_i\in \Rl[X]$ be $(p^j)$-minimal polynomials of $A$ (for $1\leq i \leq \ell$). Then $i\in \SIndexSet{A}{\ell}$ if and only if $\deg(\nu_i) < \deg(\nu_{i+1})$, cf.~Definition~\ref{def:indexset}.
 Further, note that if $i\in \SIndexSet{A}{\ell}$, then  $\deg(\nu_{\nf{i}}) = \deg(\nu_{i+1})$.
\end{Rem}

In this section we analyze the structure of the \Rl-module $\Rl[A]$. Since the null ideal of $A$ contains a monic polynomial, there exists a power of $A$ which can be written as an $\Rl$-linear combination of smaller powers of $A$. Therefore the module $\Rl[A]$ is finitely generated.
As a finitely generated module over a principal ideal ring, $\Rl[A]$ decomposes into cyclic \Rl-submodules, according to~\cite[Theorem~15.33]{Brown1993}.
We  compute such a decomposition exploiting its relation to the generating set of the null ideal $\NullI$ of $A$ which we determined in Theorem~\ref{thm:MinGenNullI} of the last section.
In particular, it turns out that the invariant factors of $\Rl[A]$ correspond to the elements in the reduced index set $\SIndexSet{A}{\ell}$ of $A$. Further, their multiplicities relate to the degrees of the $(p^j)$-minimal polynomials, see Remark~\ref{rem:connection_to_reduced_index_set}.
As the invariant factors are uniquely determined, this corroborates  the usefulness of the set of generators of the null ideal of $A$ which we determined in Section~\ref{sec:GeneratorsNullIdeal}.
To be more specific, Theorem~\ref{thm:decompositionRlA} below states that, if $\SIndexSet{A}{\ell}$ is the reduced index set of $A$ and $s_j= \deg(\nu_{\nf{j}})-\deg(\nu_j)$ for $j\in \SIndexSet{A}{\ell}$, then 
\begin{align}\label{eq:decomposition}
  \Rl[A]\simeq \Rl^{\pdeg} \oplus \bigoplus_{j\in\SIndexSet{A}{\ell}} (R_{\ell-j})^{s_j}
\end{align}
where $\pdeg = \deg(\nu_1)$ is the degree of the minimal polynomial of $A$ modulo $p$.
Roughly speaking,  the \Rl-free part $\Rl^{\pdeg}$ of the decomposition in~\eqref{eq:decomposition} indicates what happens in terms of classical linear algebra over the field $R_1$ while the torsion-part of $\Rl[A]$ relates to the set \SIndexSet{A}{\ell}.

In order to understand this connection, let $d$ be the degree of a $(p^\ell)$-minimal polynomial $\nu_{\ell}$. Then $A^d$ is an \Rl-linear 
combination of $I$, $A$, ..., $A^{d-1}$, and thus $\Rl[A] = \ModuleVar{\Rl}{I, A, \ldots, A^{d-1}}$. Hence the following sequence of \Rl-modules is exact.
\begin{align}
\begin{aligned}\label{exseq:RA}
       \Nl \longrightarrow  \ker(\psi)   \longrightarrow \Rl^{d}   \;      &\stackrel{\psi}{\longrightarrow} \;\Rl[A]  \longrightarrow  \Nl \\
                                                \basis{e}{i}\;    &\longmapsto\;                    \:A^{i-1}                     
\end{aligned}                                               
\end{align}
where \textenum{\basis{e}{1}}{\basis{e}{d}} is an arbitrary basis of $\Rl^{d}$.
It follows that 
\begin{align*}
 \Rl[A] \simeq \nicefrac{\Rl^d}{ \ker(\psi)}.
\end{align*}
Elements of $ \ker(\psi)$ correspond to relations between the matrices \textenum{I,A}{A^{d-1}} and therefore to  polynomials in the null ideal $\NullI$ of $A$ of degree less than $d$. Hence 
\begin{align}\label{eq:M_Nl}
\sum_{i=1}^d\lambda_i\basis{e}{i}\in  \ker(\psi) \quad \Longleftrightarrow \quad \sum_{i=1}^d\lambda_iX^{i-1} \in \NullI
\end{align}
where $\lambda_1,\ldots,\lambda_d \in \Rl$.
We exploit this equivalence and use a generating set of the null ideal \NullI of $A$ to compute a generating set of the module $\ker(\psi)$. Nevertheless, we need to be careful, since (as an ideal of $\Rl[X]$) \NullI is an $\Rl[X]$-module and $ \ker(\psi)$ is only an $\Rl$-module. Hence multiplication by $X$ needs to be dealt with when transferring a generating set of \NullI to a generating set of $ \ker(\psi)$.
For this purpose, set $\Rld = \{\,f\in \Rl[X] \mid \deg(f) < d\,\}$. Then
\begin{align}
 \begin{split}\label{eq:isophi}
  \varphi: \Rld \,          &\stackrel{\sim}{\longrightarrow} \, \Rl^d\\
          X^{i-1}     \quad &\longmapsto\;\,                        \basis{e}{i}
 \end{split}
\end{align}
is an \Rl-module isomorphism. Let 
\begin{align*}
\Nld &=\{\,f\in\NullI \mid \deg(f) < d\,\}
\end{align*}
be the set of all elements in \NullI{} of degree less than $d$. Then \Nld is an \Rl-module, and for $f_1,\ldots,f_r\in \Rld$, the following holds
\begin{align*}
  \Nld = \ModuleVar{\Rl}{f_1,\ldots,f_r} \quad \Longleftrightarrow \quad  \ker(\psi) = \ModuleVar{\Rl}{\varphi(f_1),\ldots,\varphi(f_r)}
\end{align*}
according to the equivalence in~\eqref{eq:M_Nl}. We modify the sequence in~\eqref{exseq:RA} accordingly to get the following exact sequence of $\Rl$-modules. 
\begin{align}
\begin{aligned}\label{exseq:RA2}
       \Nl \longrightarrow  \Nld   \longrightarrow \Rld  &\longrightarrow \Rl[A]  \longrightarrow  \Nl \\
                                         X^i\quad    &\longmapsto\;\;       A^{i}    
\end{aligned}
\end{align}

The following lemma describes which $\Rl[X]$-generating sets of $\NullI{}$ can be transferred to $\Rl$-generating sets of $\Nld$.

% ------------------------------------------------------------------- 
% ------------------------------------------------------------------- 
\begin{Lem}\label{lem:findRlgens}
Let $A\in \Matrn{\Rl}$ be a square matrix over \Rl and $d$ the degree of a $(p^{\ell})$-minimal polynomial of $A$. 
Further, let $f_1,\ldots, f_m$ be a generating set of the null ideal $\NullI{}$ of $A$ in $\Rl[X]$ such that 
\begin{enumerate}
 \item $\deg(f_1) < \cdots < \deg(f_m) = d $,
 \item $f_i = \Res{p^{t_i}}{p^{\ell}}\,g_i$ for  monic polynomials $g_i \in \Rl[X]$  ($1 \leq i\leq m$) and natural numbers $t_1 > \cdots > t_m$,
 \item $f \in \sum_{i\in \mathcal{I}^{[f]}}f_i\,\Rl[X]$ for all $f\in \NullI{}$, where $\mathcal{I}^{[f]} = \{\,1 \leq i \leq m \mid \deg(f_i) \leq \deg(f) \,\}$.
\end{enumerate}
Then
\begin{align*}
 \Nld = \sum_{i=1}^{m-1} \sum_{t=1}^{s_i}{(X^{t-1}f_i)} \,{\Rl}
\end{align*}
where $s_i = \deg(f_{i+1}) - \deg(f_i)$. 
\end{Lem}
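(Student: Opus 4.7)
My plan is to show both inclusions separately. The inclusion $\supseteq$ is a routine degree count: each generator $X^{t-1} f_i$ with $1 \leq i \leq m-1$ and $1 \leq t \leq s_i$ lies in $\NullI$, and condition (1) combined with $t \leq s_i$ gives $\deg(X^{t-1} f_i) \leq (s_i-1) + \deg(f_i) = \deg(f_{i+1}) - 1 \leq d - 1$, placing it in $\Nld$.

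For the reverse inclusion, the plan is strong induction on $\deg(f)$, with the base case $f = 0$ being trivial. Given $f \in \Nld$ with $f \neq 0$, condition (3) yields an index $k = \max \mathcal{I}^{[f]}$ which satisfies $k \leq m-1$ (since $\deg(f) < d = \deg(f_m)$) and $\deg(f_k) \leq \deg(f) < \deg(f_{k+1})$; I then write $f = \sum_{i=1}^{k} h_i f_i$ with $h_i \in \Rl[X]$.

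The crux is to extract a ``piece'' of $f$ that is manifestly one of the declared generators and shares the leading term of $f$. From the strict monotonicity $t_1 > \cdots > t_k$ in condition (2), every $f_i$ with $i \leq k$ lies in $\Res{p^{t_k}}{p^{\ell}}\Rl[X]$, so the same holds for $f$, and therefore $\lc(f) = \Res{p^{t_k}}{p^{\ell}} c$ for some $c \in \Rl$. I then set $r_k = c\, X^{\deg(f)-\deg(f_k)}$; the exponent is nonnegative (as $k \in \mathcal{I}^{[f]}$) and at most $s_k - 1$ (since $\deg(f) < \deg(f_{k+1}) = \deg(f_k)+s_k$), so $r_k f_k$ is a scalar multiple of the declared generator $X^{\deg(f)-\deg(f_k)} f_k$ of the right-hand side, and the monicness of $g_k$ guarantees that the leading term of $r_k f_k = \Res{p^{t_k}}{p^{\ell}} c\, X^{\deg(f)-\deg(f_k)} g_k$ equals $\lc(f) X^{\deg(f)}$.

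Consequently $f - r_k f_k \in \Nld$ has degree strictly less than $\deg(f)$, and the induction hypothesis places $f - r_k f_k$ (and hence $f$) into $\sum_{i=1}^{m-1}\sum_{t=1}^{s_i} X^{t-1} f_i\,\Rl$. The only non-routine step is the divisibility $\lc(f) \in \Res{p^{t_k}}{p^{\ell}} \Rl$; this is precisely what condition (2) is designed to deliver through the strict descent of the exponents $t_i$, so I expect no further obstacle.
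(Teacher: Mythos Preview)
Your proof is correct and follows essentially the same route as the paper: induct on $\deg(f)$, locate $k$ with $\deg(f_k)\le\deg(f)<\deg(f_{k+1})$, use condition~(2) to conclude that every coefficient of $f$ is divisible by $p^{t_k}$ in $R_\ell$, then strip off a suitable multiple of $f_k$ and apply the induction hypothesis. The only cosmetic difference is that the paper performs a full polynomial division of $f'=f/p^{t_k}$ by the monic $g_k$ (dropping the degree below $\deg(f_k)$ in a single step), whereas you peel off just the leading term and reduce the degree by one at a time.
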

% ------------------------------------------------------------------- 
\begin{proof}
The conditions on the degrees of the polynomials $f_i$ guarantee that $\deg(X^{t-1}f_i)<d$ for $1\leq i\leq m-1$ and $1\leq t \leq s_i$. Hence the inclusion ``$\supseteq$'' is easily seen and it suffices to show ``$\subseteq$''. Let $f\in \Nld$. We prove this by induction on $\deg(f)$. 

For the basis, let $0\neq f\in \Nld$ be a polynomial of minimal degree in \Nld, that is, $\deg(f)\leq\deg(g)$ for all $g\in \Nld$. Since 
\begin{align*}
f \in \sum_{i\in \mathcal{I}^{[f]}}f_i\,\Rl[X] 
\end{align*}
it follows that $\mathcal{I}^{[f]} = \{\,1\leq i \leq m \mid \deg(f_i)\leq \deg(f)\,\}\neq \emptyset$. Therefore $\deg(f) = \deg(f_1)$ and $\mathcal{I}^{[f]} = \{1\}$ (since $\deg(f_j) > \deg(f_1)$ for $j>1$).
Hence $f= rf_1$ for $r\in \Rl$ which proves the basis. 

Assume now $f\in \Nld$ with $\deg(f)>\deg(f_1)$. Let $1\leq k < m$ such that $\deg(f_k) \leq \deg(f) < \deg(f_{k+1})$.
Then, $f\in \sum_{i=1}^kf_i\Rl[X] \subseteq p^{t_k}\Rl[X]$ according to our assumptions on the polynomials $f_i$ (where we write $p$ for its residue class \Res{p}{p^{\ell}}).
Let $f'\in \Rl[X]$ (with $\deg(f)=\deg(f')$) such that $f = p^{t_k}f'$.
Since $f_k=p^{t_k}g_k$ for a monic polynomial $g_k\in \Rl[X]$, 
 there exist $q,r\in \Rl[X]$ with $\deg(r)<\deg(g_k) =\deg(f_k)$ such that
\begin{align}\label{eq:defhk}
 f' = qg_k + r.
\end{align}
Therefore 
\begin{align*}
 f = qf_k + p^{t_k}r
\end{align*}
which implies $p^{t_k}r\in \Nld$, and we can apply the induction hypothesis to $p^{t_k}r$. Hence 
\begin{align*}
p^{t_k}r\in \sum_{i=1}^{m-1} \sum_{t=1}^{s_i}{(X^{t-1}f_i)} \,{\Rl}.
\end{align*}
Since $\deg(f') =\deg(f) <\deg(f_{k+1})$, Equation~\eqref{eq:defhk} implies $\deg(q) = \deg(f)-\deg(f_k)<\deg(f_{k+1})-\deg(f_k) = s_k$. Therefore
\begin{align*}
 qf_k \in \sum_{t=1}^{s_k} (X^{t-1}f_k)\Rl
\end{align*}
and the assertion follows for $f = qf_k + p^{t_k}r$.
\end{proof}
% ------------------------------------------------------------------- 
% ------------------------------------------------------------------- 
According to Corollary~\ref{cor:polyrepdegrees}, any generating set of the form $\{\,p^{\ell-i}\nu_i\mid i\in \SIndexSet{A}{\ell}\,\}$, where $\nu_i\in \Rl[X]$ are $(p^i)$-minimal polynomials, satisfies the conditions  of Lemma~\ref{lem:findRlgens}. This allows us to prove the following theorem which is the main result of this section.

% ----------------------------------------------------------------------------------------------------------------------
% ----------------------------------------------------------------------------------------------------------------------
\begin{Thm}\label{thm:decompositionRlA}
Let $A\in \Matrn{\Rl}$  and $\nu_i\in \Rl[X]$ be $(p^i)$-minimal polynomials with $d_i =\deg(\nu_i)$ for $0\leq i \leq \ell$. Then  
\begin{align*}
\Rl[A] \simeq \bigoplus_{i=0}^{\ell-1}(R_{\ell - i})^{d_{i+1}-d_i}.
\end{align*}
Further, let $\SIndexSet{A}{\ell}$ be the reduced  index set of $A$ and 
$s_i = \deg(\nu_{\nf{i}}) - \deg(\nu_{i})$ for $i\in \SIndexSet{A}{\ell}$, then
\begin{align*}
\Rl[A] \simeq \Rl^{\pdeg} \oplus \bigoplus_{i\in \SIndexSet{A}{\ell}}(R_{\ell - i})^{s_i}
\end{align*}
where $\pdeg=\deg(\nu_1)$ is the $p$-degree of $A$. 
\end{Thm}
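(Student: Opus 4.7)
The plan is to identify $R_\ell[A]$ with the quotient $\Rld/\Nld$ via the exact sequence~\eqref{exseq:RA2} and then extract the cyclic decomposition through a unipotent change of basis dictated by Lemma~\ref{lem:findRlgens}.

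First I verify that $\{p^{\ell-i}\nu_i : i\in\IndexSet{A}{\ell}\setminus\{0\}\}$ is a generating set of $\NullI$ satisfying the three hypotheses of Lemma~\ref{lem:findRlgens}: the strict ordering of degrees follows from the definition of $\IndexSet{A}{\ell}$, the strict decrease of the $p$-powers $t_i=\ell-i$ is immediate, and condition (iii) is precisely Corollary~\ref{cor:polyrepdegrees}. Enumerating $\IndexSet{A}{\ell}\setminus\{0\}=\{i_1<\cdots<i_m\}$ with $i_m=\ell$, I record that $d_{i_1}=\pdeg$, since $d_0=0<d_1$ and $d_1=d_2=\cdots=d_{i_1}$ by construction of the index set. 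Lemma~\ref{lem:findRlgens} then yields
\[
\Nld=\sum_{j=1}^{m-1}\sum_{t=1}^{d_{i_{j+1}}-d_{i_j}}\bigl(X^{t-1}\,p^{\ell-i_j}\,\nu_{i_j}\bigr)R_\ell,
\]
whose generators exhaust the leading monomials $X^{d_{i_1}},X^{d_{i_1}+1},\ldots,X^{d-1}$ without repetition. For each such $k$, let $j(k)$ denote the unique index with $d_{i_{j(k)}}\le k < d_{i_{j(k)+1}}$; the corresponding generator has the form $r_k=p^{\ell-i_{j(k)}}(X^k+w_k)$ with $\deg(w_k)<k$.

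Next I switch to a new $R_\ell$-basis of $\Rld$ by setting $\tilde e_k=X^k+w_k$ for $d_{i_1}\le k\le d-1$ and $\tilde e_k=X^k$ otherwise. The transition matrix is unipotent lower triangular, hence invertible over $R_\ell$. In this basis each relation reads $r_k=p^{\ell-i_{j(k)}}\tilde e_k$, so
\[
\Nld=\bigoplus_{k=d_{i_1}}^{d-1} p^{\ell-i_{j(k)}}\tilde e_k R_\ell.
\]
Using $R_\ell/p^a R_\ell\simeq R_a$ for $0\le a\le \ell$ and taking the quotient summand by summand gives
\[
R_\ell[A]\;\simeq\;\Rld/\Nld\;\simeq\; R_\ell^{\pdeg}\oplus\bigoplus_{k=d_{i_1}}^{d-1} R_{\ell-i_{j(k)}}.
\]
Grouping the indices $k$ by the value of $j(k)$ produces $s_{i_j}=d_{i_{j+1}}-d_{i_j}$ copies of $R_{\ell-i_j}$ for each $j\in\{1,\ldots,m-1\}=\SIndexSet{A}{\ell}$, which is the reduced form of the decomposition. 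The unreduced form then follows by noting that $d_{i+1}-d_i=0$ for $i\notin \IndexSet{A}{\ell}$, while for $i\in\SIndexSet{A}{\ell}$ one has $d_{i+1}=d_{\nf{i}}$ and hence $d_{i+1}-d_i=s_i$; the $i=0$ summand contributes the free part $R_\ell^{d_1}=R_\ell^{\pdeg}$.

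The principal subtlety is the basis change itself: although $R_\ell$ has zero divisors, the fact that every relation $r_k$ has a distinct leading monomial $X^k$ whose coefficient is a pure $p$-power allows the lower-order perturbations $w_k$ to be absorbed into a unipotent substitution, which is invertible over any commutative ring. This simultaneously diagonalizes all the generators of $\Nld$ and cleanly exposes the cyclic torsion structure of $R_\ell[A]$ without the usual unit-obstructions encountered when trying to invoke Smith normal form over a ring with zero divisors.
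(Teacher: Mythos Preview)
Your proof is correct and follows essentially the same route as the paper: verify the hypotheses of Lemma~\ref{lem:findRlgens} via Corollary~\ref{cor:polyrepdegrees}, use the resulting generators of $\Nld$ to build a new basis of $\Rld$ in which the relations are diagonal, and read off the cyclic decomposition from the exact sequence~\eqref{exseq:RA2}. Your framing of the basis change as a unipotent lower-triangular substitution is exactly the paper's observation that $\deg(\basis{b}{j})=j-1$, and your derivation of the unreduced form from the reduced one simply reverses the order in which the paper establishes their equivalence.
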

% ----------------------------------------------------------------------------------------------------------------------
\begin{proof}
First, we show that the two decompositions of $\Rl[A]$ given in the theorem, are isomorphic. 
Recall that $\nu_0 = 1$ and $d_0 = 0$. Hence $\Rl^{\pdeg} = R_{\ell-i}^{d_{i+1}-d_i} $ for $i=0$. 
Let now $i\geq 1$. By Remark~\ref{rem:reduced_index_set}, an element $1 \leq i < \ell$ is in the reduced index set \SIndexSet{A}{\ell} of $A$ if and only if $d_i < d_{i+1}$, and if one of these equivalent conditions is satisfied, then $d_{i+1} = d_{\nf{i}}$.
Therefore, $i\in \SIndexSet{A}{\ell}$ if and only if $R_{\ell-i}^{d_{i+1}-d_i} \neq \Nl$ and then 
$(R_{\ell - i})^{s_i}=(R_{\ell - i})^{d_{i+1}-d_i}$.
Hence the two representations are isomorphic and it suffices to show that 
\begin{align*}
\Rl[A] \simeq \Rl^{\pdeg} \oplus \bigoplus_{i\in \SIndexSet{A}{\ell}}(R_{\ell - i})^{s_i}.
\end{align*}
According to Corollary~\ref{cor:polyrepdegrees} the polynomials in $\{\,p^{\ell-i}\nu_i\mid i\in \SIndexSet{A}{\ell}\,\}$ satisfy the conditions of  Lemma~\ref{lem:findRlgens}, and therefore 
\begin{align*}
  \Nld = \sum_{i\in \SIndexSet{A}{\ell}}\sum_{t=1}^{s_i} (p^{\ell-i}X^{t-1}\nu_i)\,\Rl.
\end{align*}

Since $s_i =\deg(\nu_{\nf{i}}) - \deg(\nu_{i}) $, it follows that 
\begin{align*}
\delta: \{\,(i,t)\mid i\in \SIndexSet{A}{\ell}, 1\leq t \leq s_i\,\} \; &\stackrel{\sim}{\longrightarrow} \; \{\,\pdeg+1,\ldots,d\,\}\\
                       (i,t)            \quad\quad  &\longmapsto  \quad                    \deg(\nu_{i})+t 
\end{align*}
is a bijection. 
For $1\leq j\leq d$, we define
\begin{align*}
 \basis{b}{j} = \begin{cases}
                        X^{j-1} & \text{ if } 1\leq j \leq \pdeg \\
                        X^{t-1}\nu_{i} & \text{ if } \pdeg +1\leq j = \delta(i,t)\leq d\ .
                       \end{cases}
\end{align*}
Observe that $\deg(\basis{b}{j}) = j-1$. Hence $\basis{b}{1},\ldots,\basis{b}{d}$ is a basis of $\Rld$. 
Together with the exact sequence~\eqref{exseq:RA2}, this implies 
\begin{align*}
 \Rl[A] &\simeq \nicefrac{\Rld}{ \Nld} \\
        &\simeq \bigoplus_{i=1}^{\pdeg} \basis{b}{i}\,\Rl
                \oplus 
                \bigoplus_{i\in \SIndexSet{A}{\ell}} 
                \bigoplus_{t=1}^{s_i} 
                \nicefrac{%
                  \basis{b}{\delta(i,t)}\Rl%
                }{%
                (p^{\ell-i}\, \basis{b}{\delta(i,t)})\Rl %
                } \\
        &\simeq \Rl^{\pdeg} \oplus \bigoplus_{i\in \SIndexSet{A}{\ell}} (R_{\ell - i})^{s_i} \ .
\end{align*}

\end{proof}
% ----------------------------------------------------------------------------------------------------------------------
% ----------------------------------------------------------------------------------------------------------------------

% ----------------------------------------------------------------------------------------------------------------------
% ----------------------------------------------------------------------------------------------------------------------
\begin{Rem}\label{rem:connection_to_reduced_index_set}
Let the notation be as in Theorem~\ref{thm:decompositionRlA}. If $\SIndexSet{A}{\ell} = \{i_1,\ldots, i_r\}$ with $i_1 < \cdots <i_r< i_{r+1} =\ell$. Then $s_{i_j} = \deg(\nu_{i_{j+1}})-\deg(\nu_{i_{j}})$ for $1\leq j\leq r$.
According to Theorem~\ref{thm:decompositionRlA}, the uniquely determined invariant factors of $\Rl[A]$ (with multiplicities) are 
\begin{align*}
 \underbrace{1,\ldots,1}_{\pdeg}, \underbrace{p^{\ell-i_1},\ldots,p^{\ell-i_1}}_{s_{i_1}},\ldots,\underbrace{p^{\ell-i_{r}},\ldots,p^{\ell-i_{r}}}_{s_{i_{r}}}.
\end{align*}
Note that the occurring exponents $\ell-i_1,\ldots,\ell-i_{r}$ of the invariant factors correspond to the elements of the set $\SIndexSet{A}{\ell}$.
Further, if $\nu_k\in \Rl[X]$ is a $(p^k)$-minimal polynomial of $A$ (for $1\leq k\leq \ell$), then there exists $1\leq u\leq r+1$ such that $\deg(\nu_k) = \deg(\nu_{i_u})$ and 
\begin{align*}
  \deg(\nu_k) = \sum_{i=0}^{k-1}(d_{i+1}-d_i) =\pdeg + \sum_{j=1}^{u-1}s_{i_j}.
\end{align*}
\end{Rem}
% ----------------------------------------------------------------------------------------------------------------------
% ----------------------------------------------------------------------------------------------------------------------

Recall that the $\ell$-th index set of a matrix defines a generating set of the null ideal $\NullI[\Rl](A)$ of $A$ consisting of polynomials of the form $p^{\ell-j}\nu_j$. Per definition, \SIndexSet{A}{\ell} depends on the degrees of these polynomials. In particular, observe that  $\SIndexSet{A}{\ell} = \emptyset$ if and only if  $\deg(\nu_{\ell}) = \deg(\nu_{1}) = \pdeg$. Together with Theorems~\ref{thm:MinGenNullI} and \ref{thm:decompositionRlA} this implies the following corollary.
% ----------------------------------------------------------------------------------------------------------------------
% ----------------------------------------------------------------------------------------------------------------------
\begin{Cor}
 Let $A\in \Matrn{\Rl}$ with $\ell$-th index set $\IndexSet{A}{\ell}$, $(p^{\ell})$-minimal polynomial $\nu_{\ell}$   and $p$-degree \pdeg. Then the following assertions are equivalent:
 \begin{enumerate}
  \item $\Rl[A]\simeq \Rl^{\pdeg}$
  \item $\deg(\nu_{\ell}) = \pdeg$
  \item $\NullI[\Rl](A) = \nu_{\ell}\Rl[X]$
 \end{enumerate}
\end{Cor}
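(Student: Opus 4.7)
The plan is to deduce all three equivalences from Theorems~\ref{thm:decompositionRlA} and~\ref{thm:MinGenNullI}, together with the uniqueness of invariant factors of a finitely generated module over the principal ideal ring $\Rl$ (as cited from~\cite{Brown1993}).

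For (1) $\Longleftrightarrow$ (2): the sequence $\pdeg = \deg(\nu_1)\le \deg(\nu_2)\le \cdots \le \deg(\nu_\ell)$ is non-decreasing, and hence is constant precisely when $\deg(\nu_\ell) = \pdeg$. By Definition~\ref{def:indexset} and Definition~\ref{def:reducedindexset_successor}, constancy of this sequence is in turn equivalent to $\SIndexSet{A}{\ell} = \emptyset$. Since for every $i\in \SIndexSet{A}{\ell}$ one has $0<i<\ell$ (so that $R_{\ell-i}\neq \Nl$) and $s_i \geq 1$, the torsion summand in the decomposition of Theorem~\ref{thm:decompositionRlA} vanishes exactly when $\SIndexSet{A}{\ell}=\emptyset$. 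Conversely, if $\Rl[A]\simeq \Rl^{\pdeg}$, then the uniqueness of invariant factors forces the torsion part of the decomposition to be trivial, giving the remaining direction.

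For (2) $\Longrightarrow$ (3), I would invoke Theorem~\ref{thm:MinGenNullI}: under assumption (2) the argument above gives $\IndexSet{A}{\ell}=\{0,\ell\}$, so for any lift $A'\in \Matrn{D}$ of $A$,
\[
\NullIovD{p^{\ell}}(A') \;=\; p^{\ell}\nu_0 D[X] + \nu_\ell D[X] \;=\; p^{\ell} D[X] + \nu_\ell D[X],
\]
and reducing modulo $p^{\ell}$ yields $\NullI[\Rl](A) = \nu_\ell \Rl[X]$. For (3) $\Longrightarrow$ (1), assumption (3) gives $\Rl[A]\simeq \Rl[X]/\nu_\ell \Rl[X]$, which is $\Rl$-free of rank $\deg(\nu_\ell)$ because $\nu_\ell$ is monic (apply polynomial division to obtain the basis $1, X, \ldots, X^{\deg(\nu_\ell)-1}$). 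Comparing this with the decomposition of Theorem~\ref{thm:decompositionRlA} and invoking uniqueness of invariant factors forces the torsion part to vanish and $\deg(\nu_\ell) = \pdeg$, yielding (1).

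I do not anticipate any serious obstacle; the corollary is essentially a bookkeeping exercise assembling three equivalent characterizations of $\SIndexSet{A}{\ell}=\emptyset$ out of results already established. The only point deserving care is that $\Rl$ is not a domain, so one must rely on uniqueness of invariant factors over principal ideal rings rather than on a naive rank or dimension argument over a field.
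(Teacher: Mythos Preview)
Your proposal is correct and follows essentially the same approach as the paper, which simply observes (in the sentence preceding the corollary) that all three conditions are equivalent to $\SIndexSet{A}{\ell}=\emptyset$ via Theorems~\ref{thm:MinGenNullI} and~\ref{thm:decompositionRlA}. Your argument just fills in the details the paper leaves implicit; the only minor variation is that for $(3)\Rightarrow(1)$ you route through freeness of $\Rl[X]/\nu_\ell\Rl[X]$ and invariant-factor uniqueness, whereas a direct degree comparison (the nonzero element $p^{\ell-1}\nu_1\in\nu_\ell\Rl[X]$ has degree $\pdeg$, forcing $\deg(\nu_\ell)\le\pdeg$) would give $(3)\Rightarrow(2)$ without invoking Theorem~\ref{thm:decompositionRlA} a second time.
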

% ----------------------------------------------------------------------------------------------------------------------
% ----------------------------------------------------------------------------------------------------------------------

We can reformulate this in terms of matrices with entries in $D$.
% ----------------------------------------------------------------------------------------------------------------------
% ----------------------------------------------------------------------------------------------------------------------
\begin{Cor}\label{cor:degnuell=dp}
Let $A\in \Matrn{D}$ and $\ell\in \N$. Further, let $\nu_j\in D[X]$  be $(p^j)$-minimal polynomials of $A$ for $1\leq j\leq \ell$ and \Res{A}{p^j} be the image of $A$ under projection modulo $p^j$. The following assertions are equivalent.
\begin{enumerate}
 \item  $\NullIovD[D]{p^{\ell}}(A)=\nu_{\ell}D[X] + p^{\ell}D[X]$.
 \item  $\NullIovD[D]{p^{j}}(A)= \nu_j D[X]+ p^jD[X]$  for all  $1\leq j \leq \ell$.
 \item  $R_j[\Res{A}{p^j}] \simeq R_j^{\pdeg}$ for all  $1\leq j \leq \ell$.
 \item  $\deg(\nu_{\ell}) = \pdeg$.
 \item  $\nu_{\ell}$ is a $(p^j)$-minimal polynomial of $A$ for all $1\leq j \leq \ell$.
\end{enumerate}
\end{Cor}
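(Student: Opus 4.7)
The plan is to establish the cyclic chain $(4) \Rightarrow (5) \Rightarrow (2) \Rightarrow (1) \Rightarrow (4)$ together with the equivalence $(2) \Leftrightarrow (3)$. The core tools will be Theorem~\ref{thm:MinGenNullI} (which reads off $\NullIovD[D]{p^j}(A)$ from the $j$-th index set), the preceding corollary (which handles the analogous statement over a fixed residue class ring $\Rl$), and a short leading-coefficient calculation.

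First I would record the standing inequalities $\pdeg = \deg(\nu_1) \leq \deg(\nu_2) \leq \cdots \leq \deg(\nu_{\ell})$ together with $\pdeg \geq 1$ (otherwise $\nu_1 = 1$ would force $I \in p\Matrn{D}$). Assuming (4), every intermediate $\nu_j$ must have degree exactly $\pdeg$; since $\nu_{\ell}$ lies in the smaller ideal $\NullIovD[D]{p^{\ell}}(A) \subseteq \NullIovD[D]{p^{j}}(A)$ as a monic polynomial of this minimal degree, it is itself a $(p^j)$-minimal polynomial, giving (5). Conversely (5) forces $\deg(\nu_j) = \deg(\nu_{\ell})$ for all $j$, so $\IndexSet{A}{j} = \{0,j\}$ for every $1 \leq j \leq \ell$; Theorem~\ref{thm:MinGenNullI} then specialises to $\NullIovD[D]{p^{j}}(A) = \nu_j D[X] + p^j D[X]$, which is (2). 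Specialising $j = \ell$ in (2) immediately yields (1).

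For $(1) \Rightarrow (4)$ I would use that $p^{\ell-1}\nu_1 \in \NullIovD[D]{p^{\ell}}(A)$ (since $\nu_1(A) \in p\Matrn{D}$). Under (1), write $p^{\ell-1}\nu_1 = q\nu_{\ell} + p^{\ell} r$ with $q, r \in D[X]$. If $\deg(\nu_{\ell}) > \pdeg = \deg(\nu_1)$, then $q = 0$ by degree, so $p^{\ell-1}\nu_1 = p^{\ell} r$; since $\nu_1$ is monic, comparing leading coefficients forces $p^{\ell-1} \in p^{\ell} D$, contradicting that $p$ is a nonzero prime in the domain $D$. Hence $\deg(\nu_{\ell}) \leq \pdeg$, and combined with the monotonicity above, equality holds.

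For $(2) \Leftrightarrow (3)$ I would translate statement by statement to the residue class ring $R_j$. By Remark~\ref{rem:defminpoly_ovD}, $\nu_j$ is a $(p^j)$-minimal polynomial of $A$ over $D$ iff its reduction is a minimal polynomial of $\Res{A}{p^j}$ over $R_j$, and $\NullIovD[D]{p^{j}}(A) = \nu_j D[X] + p^j D[X]$ iff $\NullI[R_j](\Res{A}{p^j}) = \Res{\nu_j}{p^j} R_j[X]$ (the summand $p^j D[X]$ is exactly the kernel of $D[X] \to R_j[X]$). Applying the preceding corollary to each $\Res{A}{p^j}$ in turn (and noting that the $p$-degree is preserved under reduction modulo any power of $p$) then delivers $(2) \Leftrightarrow (3)$. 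I expect the only non-bookkeeping step to be the leading-coefficient argument in $(1) \Rightarrow (4)$; everything else is a direct application of Theorem~\ref{thm:MinGenNullI} or of the preceding corollary.
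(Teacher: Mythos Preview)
Your cycle $(4)\Rightarrow(5)\Rightarrow(2)\Rightarrow(1)\Rightarrow(4)$ together with $(2)\Leftrightarrow(3)$ is sound and matches the paper's intent: the corollary is stated without proof as a reformulation of the preceding corollary over $R_\ell$, and your use of Theorem~\ref{thm:MinGenNullI} plus the preceding corollary at each level $j$ is exactly the intended route.

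There is one small gap in your $(1)\Rightarrow(4)$ step. From $p^{\ell-1}\nu_1 = q\nu_\ell + p^\ell r$ with arbitrary $q,r\in D[X]$ you cannot conclude $q=0$ ``by degree'': without a bound on $\deg r$, the leading terms of $q\nu_\ell$ and $p^\ell r$ may cancel. The fix is immediate: since $\nu_\ell$ is monic, polynomial division lets you absorb the high-degree part of $r$ into $q$ and assume $\deg r < \deg \nu_\ell$; then $q\neq 0$ would force $\deg(q\nu_\ell + p^\ell r) = \deg(q\nu_\ell)\geq \deg\nu_\ell > \pdeg$, a contradiction, and your leading-coefficient argument goes through. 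Alternatively, you can bypass this direct computation entirely: assertion~(1) is equivalent to $\NullI[R_\ell](\Res{A}{p^\ell}) = \Res{\nu_\ell}{p^\ell}\,R_\ell[X]$, and the preceding corollary applied at level $\ell$ gives $(1)\Leftrightarrow(4)$ immediately, which is closer to how the paper presents the result as a mere reformulation.
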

% ----------------------------------------------------------------------------------------------------------------------
% ----------------------------------------------------------------------------------------------------------------------

Recall, that Proposition~\ref{pro:degstabatell} states, that for $A\in \Matrn{D}$, there exists $m\in \N$ such that $\deg(\nu_{m+k}) = \deg(\nu_A)$     for all $k\geq 0$. Then $\SIndexSet{A}{m+k} = \SIndexSet{A}{m}$, cf.~Remark~\ref{rem:propofellindexset}. Together with Theorem~\ref{thm:decompositionRlA} we conclude this section with a final corollary.
% ----------------------------------------------------------------------------------------------------------------------
% ----------------------------------------------------------------------------------------------------------------------
\begin{Cor}
 Let $A\in \Matrn{D}$ and $\nu_j$ be $(p^j)$-minimal polynomials for $j\geq 1$. Further, let \Res{A}{p^j} be the image of $A$ under  projection modulo $p^j$.
Then there exists $m\in \N$ such that for all $\ell \geq m$ the following holds
        \begin{align*}
           R_{\ell}[\Res{A}{p^{\ell}}] \simeq R_{\ell}^{\pdeg} \oplus \bigoplus_{j\in \SIndexSet{A}{m}}(R_{\ell-j})^{s_j}
        \end{align*}
where $\SIndexSet{A}{m}$ is the reduced index set of $\Res{A}{p^{m}}$ and $s_j = \deg(\nu_{\nf{j}})-\deg(\nu_j)$ for $j\in \SIndexSet{A}{m}$.
In particular, $R_{\ell}[\Res{A}{p^{\ell}}]$ decomposes into $\deg(\mu_A)$ non-zero cyclic summands.
\end{Cor}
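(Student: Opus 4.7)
The plan is to reduce the statement to a direct application of Theorem~\ref{thm:decompositionRlA} once both the reduced index set and the invariants $s_j$ have been shown to stabilize. First, I would invoke Proposition~\ref{pro:degstabatell} to choose $m\in\N$ with $\deg(\nu_\ell) = \deg(\mu_A)$ for every $\ell\geq m$. Next I would argue by induction on $\ell\geq m$ that $\SIndexSet{A}{\ell} = \SIndexSet{A}{m}$: for $\ell>m$ one has $\deg(\nu_\ell) = \deg(\nu_{\ell-1})$, so Remark~\ref{rem:propofellindexset}(2) gives $\IndexSet{A}{\ell} = \{\ell\}\cup(\IndexSet{A}{\ell-1}\setminus\{\ell-1\})$, which upon removing the top element coincides with $\SIndexSet{A}{\ell-1} = \SIndexSet{A}{m}$ by the induction hypothesis.

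Moreover, for every $j\in \SIndexSet{A}{m}$, the successor of $j$ computed in $\IndexSet{A}{\ell}$ (either an interior element of $\SIndexSet{A}{m}$ or, when $j$ is the maximal element of $\SIndexSet{A}{m}$, the symbol $\ell$) indexes a polynomial of the same degree as the successor computed in $\IndexSet{A}{m}$, since $\deg(\nu_\ell)=\deg(\nu_m)=\deg(\mu_A)$. Hence $s_j$ is unambiguously determined by $\SIndexSet{A}{m}$, and applying Theorem~\ref{thm:decompositionRlA} to $\Res{A}{p^\ell}\in\Matrn{R_\ell}$ (using Notation~\ref{def:pjRlminpoly} to transport $(p^j)$-minimal polynomials between $D$ and $R_\ell$) yields the displayed isomorphism with $\SIndexSet{A}{m}$ in place of $\SIndexSet{\Res{A}{p^\ell}}{\ell}$.

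For the final assertion, I would count cyclic summands. The free part contributes $\pdeg$ summands; for the torsion part, writing $\SIndexSet{A}{m} = \{i_1 < \cdots < i_r\}$, the telescoping identity
\begin{align*}
\sum_{j\in\SIndexSet{A}{m}} s_j \;=\; \sum_{k=1}^{r-1}\bigl(\deg(\nu_{i_{k+1}})-\deg(\nu_{i_k})\bigr) \;+\; \bigl(\deg(\nu_m)-\deg(\nu_{i_r})\bigr)
\end{align*}
collapses to $\deg(\mu_A)-\deg(\nu_{i_1})$. Because $i_1$ is the smallest positive element of $\IndexSet{A}{m}$, the degrees $\deg(\nu_1),\ldots,\deg(\nu_{i_1})$ are all equal to $\pdeg$ by definition of the index set, so the collapse evaluates to $\deg(\mu_A)-\pdeg$. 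Adding the $\pdeg$ free summands produces $\deg(\mu_A)$ in total; the edge case $\SIndexSet{A}{m}=\emptyset$ simply forces $\pdeg=\deg(\mu_A)$, and the claim is immediate. No step is genuinely deep; the only point requiring care is the uniform treatment of successors across different values of $\ell$, which is precisely what the degree stabilization from Proposition~\ref{pro:degstabatell} guarantees.
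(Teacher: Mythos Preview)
Your proposal is correct and follows essentially the same route as the paper: the preceding paragraph in the paper already outlines that Proposition~\ref{pro:degstabatell} supplies $m$, Remark~\ref{rem:propofellindexset} forces $\SIndexSet{A}{m+k}=\SIndexSet{A}{m}$, and Theorem~\ref{thm:decompositionRlA} then yields the decomposition, which is precisely what you carry out in detail. Your telescoping count of summands is fine; an even shorter way is to use the first formula of Theorem~\ref{thm:decompositionRlA}, where the number of summands is $\sum_{i=0}^{\ell-1}(d_{i+1}-d_i)=d_\ell=\deg(\mu_A)$ directly.
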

% ----------------------------------------------------------------------------------------------------------------------
% ----------------------------------------------------------------------------------------------------------------------

\section{Integer-valued polynomials on one matrix}
\label{sec:IntValOnA}

This section is dedicated to the application of the results of  Section~\ref{sec:GeneratorsNullIdeal} in the context of integer-valued polynomials on a single matrix. 
Again, let $D$ be a principal ideal domain with quotient field $K$ and $A\in \Matrn{D}$ be a square matrix with entries in $D$. We want to determine the ring \IntA of all integer-valued polynomials on $A$, that is,  
\begin{align*}
 \IntA =  \{\,f \in K[X] \mid f(A) \in \Matrn{D}\,\}.
\end{align*}
Once we have an explicit description of \IntA, we can determine the ring of images of $A$ under \IntA, that is, 
\begin{align*}
 \IntImA =  \{\,f(A) \mid f\in \IntA\,\}.
\end{align*}
For the ring of integer-valued polynomials on a single matrix $A$, the following inclusion holds
\begin{align*}
  \mu_AK[X] + D[X] \subseteq \IntA.
\end{align*}
There are both instances in which equality holds, and instances in which the inclusion is strict. 
If equality holds, it is readily seen that $\IntImA = D[A]$, that is, all images of $A$ under integer-valued polynomials on $A$ can be written as $g(A)$ with $g\in D[X]$. As far as the images of $A$ are concerned, the integer-valued polynomials in $K[X]\setminus D[X]$ do not contribute anything new in this case. In fact, as the next proposition states, the reverse implication holds too. (Thanks to Giulio Peruginelli for pointing this out.)
% --------------------------------------------------------------------------------------
% --------------------------------------------------------------------------------------
\begin{Pro}\label{prop:image_equiv}
Let $D$ be a principal ideal domain and $A\in \Matrn{D}$ with minimal polynomial $\mu_A\in D[X]$. Then the following assertions are equivalent: 
\begin{enumerate}
 \item $\IntA =\mu_AK[X] + D[X]$
 \item $\forall\, f \in \IntA\setminus D[X]:\; \deg(f) \geq \deg(\mu_A)$
 \item $\IntImA = D[A]$
\end{enumerate}
\end{Pro}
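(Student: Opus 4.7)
The plan is to prove the equivalences in a cycle $(1) \Rightarrow (2) \Rightarrow (3) \Rightarrow (1)$. The key background fact is that since $D$ is a PID, it is integrally closed in its quotient field $K$, so $\mu_A$ is the minimal polynomial of $A$ over $K$ as well, and $\Nullideal{K}{}(A) = \mu_A K[X]$. Polynomial division by the monic polynomial $\mu_A$ (valid in both $D[X]$ and $K[X]$) will be the main technical tool throughout.

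For $(1) \Rightarrow (2)$, I would take $f \in \IntA \setminus D[X]$ and write $f = \mu_A h + g$ with $h \in K[X]$, $g \in D[X]$. By dividing $g$ by $\mu_A$ in $D[X]$ and absorbing the quotient into $h$, I may assume $\deg(g) < \deg(\mu_A)$. If $h = 0$, then $f = g \in D[X]$, contradicting the hypothesis. Hence $h \neq 0$, so $\deg(\mu_A h) \geq \deg(\mu_A) > \deg(g)$, which forces $\deg(f) = \deg(\mu_A h) \geq \deg(\mu_A)$.

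For $(2) \Rightarrow (3)$, the inclusion $D[A] \subseteq \IntImA$ is obvious. Conversely, given $B = f(A)$ with $f \in \IntA$, divide $f$ by $\mu_A$ in $K[X]$: $f = \mu_A q + r$ with $\deg(r) < \deg(\mu_A)$. Since $\mu_A(A) = 0$, we get $r(A) = f(A) = B \in \Matrn{D}$, so $r \in \IntA$. Now $\deg(r) < \deg(\mu_A)$ combined with hypothesis (2) forces $r \in D[X]$, giving $B = r(A) \in D[A]$.

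For $(3) \Rightarrow (1)$, the inclusion $\mu_A K[X] + D[X] \subseteq \IntA$ is the one noted just before the proposition (evaluate at $A$: the $\mu_A K[X]$ part vanishes and the $D[X]$ part lands in $\Matrn{D}$). For the reverse, take $f \in \IntA$. Then $f(A) \in \IntImA = D[A]$, so there is $g \in D[X]$ with $f(A) = g(A)$. Hence $(f - g)(A) = 0$, i.e.\ $f - g \in \Nullideal{K}{}(A) = \mu_A K[X]$, and therefore $f = (f-g) + g \in \mu_A K[X] + D[X]$.

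There is no real obstacle here; once one identifies that the minimal polynomial of $A$ over $K$ lies in $D[X]$ and generates $\Nullideal{K}{}(A)$, everything reduces to a careful application of division with remainder. The only mildly delicate point is the degree bookkeeping in $(1) \Rightarrow (2)$, where one must first normalize so that the $D[X]$-part has degree strictly less than $\deg(\mu_A)$ in order to conclude that the $\mu_A h$-term dominates.
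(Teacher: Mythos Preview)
Your proof is correct and follows essentially the same cycle $(1)\Rightarrow(2)\Rightarrow(3)\Rightarrow(1)$ as the paper, using polynomial division by $\mu_A$ and the identity $\NullI[K](A)=\mu_A K[X]$ at exactly the same places. The only differences are expository: you make explicit that $r\in\IntA$ in the step $(2)\Rightarrow(3)$ and spell out the normalization $\deg(g)<\deg(\mu_A)$ via division in $D[X]$, which the paper states more tersely.
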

% --------------------------------------------------------------------------------------
\begin{proof}
For the implication from \textit{1.}~to \textit{2.}~let $f\in \IntA\setminus D[X]$, then there exist $h\in K[X]$ and $g\in D[X]$ such that $f = h\mu_A + g$. Since $\mu_A\in D[X]$, we can assume that $\deg(g)<\deg(\mu_A)$. Further, $f\notin D[X]$ implies that $f\neq g$ and $h\neq 0$. Therefore $\deg(f) =\deg(h)+\deg(\mu_A) \geq \deg(\mu_A)$.

For the implication \textit{2.}~to \textit{3.}~let $f\in \IntA$. By polynomial division, there exists $q,r\in K[X]$ such that $f = q\mu_A + r$ and  $\deg(r)< \deg(\mu_A)$. The assumption in \textit{2}.~implies that $r\in D[X]$ and therefore $f(A) = r(A) \in D[A]$.
 
And finally we show that \textit{3}.~implies \textit{1}. Again, let $f\in \IntA$. Then, since $\IntImA=D[A]$ holds by assumption, there exists $g\in D[X]$ such that $f(A) = g(A)$. This further implies that $f-g \in \NullI[K](A) = \mu_AK[X]$ and hence there exists $h\in K[X]$ such that $f-g = h\mu_A$. The assertion follows.
\end{proof}
% --------------------------------------------------------------------------------------
% --------------------------------------------------------------------------------------
\begin{Rem}
The result above holds more generally over arbitrary domains $D$ under the additional assumptions that the minimal polynomial $\mu_A$ is an element of $D[X]$. Moreover, it is worth mentioning this assumption is only needed in the proof of the implication from \textit{1.}~to \textit{2}. 
\end{Rem}

However, in general, $\deg(\mu_A)$ is not a lower bound for the degree of polynomials in $\IntA\setminus D[X]$.
Let $f=\frac{g}{d} \in K[X]$ with $g\in D[X]$ and $d\in D$ and $d = \prod_{i=1}^m p_i^{\ell_i}$  the prime factorization of $d$.
Then the following assertions are equivalent:
\begin{enumerate}
 \item $f \in \IntA $
 \item $g(A) \equiv 0 \mod d\Matrn{D}$
 \item $g(A) \equiv 0 \mod p_i^{\ell_i}\Matrn{D}$ for all $1\leq i \leq m$
\end{enumerate}

The results of Section~\ref{sec:GeneratorsNullIdeal} provide the tools to give an explicit description of the ring \IntA of integer-valued polynomials on $A$. 

%  -------------------------------------------------------------------------------------------------------------------------
%  -------------------------------------------------------------------------------------------------------------------------
\begin{Thm}\label{thm:intvalA_desc}
Let $D$ be a principal ideal domain and $A\in \Matrn{D}$ with minimal polynomial $\mu_A\in D[X]$. 
Then there exists a finite set $\mathcal{P}_A\subset \Primes$ of prime elements of $D$ and natural numbers $m_p\in \N$ for $p\in \mathcal{P}_A$ such that 
\begin{align*}
 \IntA = \mu_AK[X] + D[X] + \sum_{p\in\mathcal{P}_A} \sum_{j\in \IndexSet{A}{(p,m_p)}} \frac{\nu_{(p,j)}}{p^j}D[X]
\end{align*}
where $\nu_{(p,j)}\in D[X]$ are $(p^j)$-minimal polynomials of $A$ for $j\geq 0$, and \IndexSet{A}{(p,m_p)} is the $m_p$-th index set of $A$ with respect to~the prime $p$. 
\end{Thm}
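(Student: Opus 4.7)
The plan is to decompose $\IntA$ into contributions coming from each prime of $D$ and then to argue that all but finitely many of these contributions are already contained in $\mu_AK[X]+D[X]$. First, every $f\in\IntA$ can be written as $f=g/d$ with $g\in D[X]$ and $d\in D\setminus\{0\}$, the defining condition being equivalent to $g\in\NullIovD{d}(A)$. Writing out the prime factorization $d=\prod_i p_i^{\ell_i}$ and applying Lemma~\ref{lem:CRT>primepowers} inductively yields $\tfrac{1}{d}\NullIovD{d}(A)=\sum_i\tfrac{1}{p_i^{\ell_i}}\NullIovD{p_i^{\ell_i}}(A)$, hence
\[
\IntA \;=\; \sum_{p\in\Primes}\sum_{\ell\geq 0}\frac{1}{p^\ell}\,\NullIovD{p^\ell}(A).
\]

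For a fixed prime $p$, let $m_p$ be the integer furnished by Corollary~\ref{cor:mlargeenough}, so that $\NullIovD{p^{m_p+k}}(A)=\mu_A D[X]+p^k\NullIovD{p^{m_p}}(A)$ for every $k\geq 0$. Dividing by $p^{m_p+k}$, together with the inclusion $\tfrac{1}{p^\ell}\NullIovD{p^\ell}(A)\subseteq\tfrac{1}{p^{m_p}}\NullIovD{p^{m_p}}(A)$ valid for $\ell\leq m_p$ (since $g\in\NullIovD{p^\ell}(A)$ implies $p^{m_p-\ell}g\in\NullIovD{p^{m_p}}(A)$), I get
\[
\sum_{\ell\geq 0}\frac{1}{p^\ell}\,\NullIovD{p^\ell}(A) \;=\; \mu_A D[1/p][X] \;+\; \frac{1}{p^{m_p}}\NullIovD{p^{m_p}}(A).
\]
Theorem~\ref{thm:MinGenNullI} rewrites the last summand as $\sum_{j\in\IndexSet{A}{(p,m_p)}}\tfrac{\nu_{(p,j)}}{p^j}D[X]$.

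The main obstacle is to show that $\mathcal{P}_A:=\{\,p\in\Primes\mid \pdeg(A)<d_A\,\}$, where $d_A:=\deg(\mu_A)$, is finite. Since $D$ is integrally closed and $\mu_A\in D[X]$, one has $D[A]\cong D[X]/(\mu_A)$, which is a free $D$-submodule of $\Matrn{D}$ of rank $d_A$ with basis $I,A,\ldots,A^{d_A-1}$. Consequently $\Matrn{D}/D[A]$ is a finitely generated $D$-module whose torsion part, by the structure theorem, has only finitely many nonzero elementary divisors. Tensoring $0\to D[A]\to\Matrn{D}\to\Matrn{D}/D[A]\to 0$ with $D/pD$ and using the long exact $\mathrm{Tor}$-sequence shows that the induced map $D[A]/pD[A]\to\Matrn{D}/p\Matrn{D}$ is injective for every $p$ not dividing any of these elementary divisors. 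For such $p$, the images of $I,A,\ldots,A^{d_A-1}$ in $\Matrn{\FacRing{D}{p}{}}$ stay $\FacRing{D}{p}{}$-linearly independent, forcing $\pdeg(A)=d_A$. Hence $\mathcal{P}_A$ is finite.

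Finally, summing the per-prime identity over all $p\in\Primes$ and using partial fractions to identify $\sum_p \mu_A D[1/p][X]$ with $\mu_AK[X]$, I obtain
\[
\IntA \;=\; \mu_AK[X] \;+\; \sum_{p\in\Primes}\sum_{j\in\IndexSet{A}{(p,m_p)}}\frac{\nu_{(p,j)}}{p^j}\,D[X].
\]
For $p\notin\mathcal{P}_A$, one has $\pdeg(A)=d_A$, so Corollary~\ref{cor:degnuell=dp} lets me take $\nu_{(p,j)}=\mu_A$ for all $j\geq 1$ and $\IndexSet{A}{(p,m_p)}=\{0,m_p\}$; the corresponding inner sum then collapses to $D[X]+\tfrac{\mu_A}{p^{m_p}}D[X]\subseteq D[X]+\mu_AK[X]$. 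Absorbing these contributions into $\mu_AK[X]+D[X]$ and restricting the outer sum to $p\in\mathcal{P}_A$ produces the asserted description.
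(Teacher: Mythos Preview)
Your argument is correct and follows the same overall architecture as the paper's proof: rewrite $\IntA$ as $\sum_{p}\sum_{\ell}\tfrac{1}{p^{\ell}}\NullIovD{p^{\ell}}(A)$ via Lemma~\ref{lem:CRT>primepowers}, collapse the inner sum using Corollary~\ref{cor:mlargeenough} and the inclusion $p\,\NullIovD{p^{\ell-1}}\subseteq\NullIovD{p^{\ell}}$, and then invoke Theorem~\ref{thm:MinGenNullI}. The genuine difference lies in how you establish finiteness of $\mathcal{P}_A$. The paper argues via the rational canonical form: over $K$ one has $T^{-1}AT=C$ with $C\in\Matrn{D}$, and $\mathcal{P}_A$ is taken to be the finite set of primes appearing in the denominators of $T$ and $T^{-1}$; for all other $p$ the similarity survives reduction mod $p$, whence $\Resp{\mu_A}$ is the minimal polynomial of $\Resp{A}$. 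You instead take $\mathcal{P}_A=\{p:\pdeg(A)<d_A\}$ directly and prove it finite by a module-theoretic argument: $D[A]\cong D[X]/(\mu_A)$ is free of rank $d_A$, so $\Matrn{D}/D[A]$ is finitely generated, and for $p$ avoiding the primes in its torsion the map $D[A]/pD[A]\hookrightarrow\Matrn{D/pD}$ is injective (its kernel being $\mathrm{Tor}_1^D(\Matrn{D}/D[A],D/pD)$, the $p$-torsion of the quotient). Your route is slightly more conceptual and yields the intrinsically minimal $\mathcal{P}_A$, whereas the paper's route is more constructive---one can read off an explicit (possibly larger) $\mathcal{P}_A$ from the transformation matrix to rational canonical form.
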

%  -------------------------------------------------------------------------------------------------------------------------
\begin{proof}
It suffices to show ``$\subseteq$''.
Recall that $\NullIovD{d}(A) = \NullIovD[D]{d}(A) = \{\,f\in D[X]\mid f(A)\in d\Matrn{D}\,\}$ and that $\NullIovD{0}(A) = \NullI(A) = \mu_AD[X] \subseteq D[X] = \NullIovD{1}(A)$ and hence
\begin{align*}
 \IntA = \sum_{d\in D\setminus \{0\}}\frac{1}{d}\, \NullIovD{d}(A).
\end{align*}
According to Lemma~\ref{lem:CRT>primepowers}, this implies 
\begin{align}\label{eq:easy_rep_inta}
 \IntA = \sum_{p\in \Primes}\sum_{\ell\in\N} \frac{1}{p^{\ell}}\, \NullIovD{p^{\ell}}(A).
\end{align}
First, we show that there exists a finite subset $\mathcal{P}_A\subseteq \Primes$ such that the following holds
\begin{align}\label{eq:nulliovd_forall_but_fin}
 \forall\, p\in\Primes\setminus \mathcal{P}_A:\; \NullIovD{p^{\ell}}(A)  = \mu_AD[X] +p^{\ell}D[X].
\end{align}
Considered as a matrix over $K$, $A$ is similar to its rational canonical form $C$, cf.~\cite{Roman08}. 
Let $\mu_1\,|\,\cdots\,|\,\mu_r = \mu_A$ be the invariant factors of $A$. Then there exists a matrix $T\in \GLn{K}$ such that
\begin{align*}
 T^{-1}AT = C = \Comp{\mu_A} \oplus \cdots \oplus \Comp{\mu_1} 
\end{align*}
where $\Comp{f}$ denotes the companion matrix of a monic polynomial $f$. Since $D$ is a principal ideal domain, it is integrally closed. As mentioned above, this implies $\mu_A\in D[X]$. Indeed, this implies that $\mu_i\in D[X]$ for all $1\leq i\leq r$, since they are all monic divisors of the characteristic polynomial $\chi_A\in D[X]$, cf.~\cite[Ch.~5,~§1.3,~Prop.~11]{Bourbaki_CommAlg}. 
Therefore the rational canonical form $C$ of $A$ is a matrix with entries in $D$. 

However, in general, $A$ is not similar to $C$ over the domain $D$, that is, we cannot assume $T\in \GLn{D}$.
Let $\mathcal{P}_A \subseteq \Primes$ be the set of prime elements which occur as divisors of 
the denominators of the entries of $T$ or its inverse $T^{-1}$. Then $\mathcal{P}_A$ is finite and  $T,T^{-1}$ are invertible matrices over the localization \Loc{D}{p} of $D$ at $p$ for all $p\in \Primes\setminus\mathcal{P}_A$ and we can reduce the equation above  modulo all $ p\in \Primes\setminus\mathcal{P}_A$: 
\begin{align*}
 \Resp{T}^{-1}\Resp{A}\Resp{T} = \Resp{T^{-1}AT} = \Resp{C} = \Comp{\Resp{\mu_A}} \oplus \cdots \oplus \Comp{\Resp{\mu_1}} 
\end{align*}
% % 
(where we identify the residue fields of $D$ and \Loc{D}{p} modulo $p$).
It is well known, that a monic polynomial $f$ is the minimal polynomial of its companion matrix \Comp{f} over any domain. 
Therefore $\Resp{\mu_A}$ is the minimal polynomial of \Comp{\Resp{\mu_A}}. Further,  $\Resp {\mu_A}(\Comp{\Resp{\mu_i}}) = 0$ holds since $\mu_i\,|\,\mu_A$ for all $1\leq i \leq m$.
Hence  $\mu_A$ is a $(p)$-minimal polynomial for all $p\in \Primes\setminus\mathcal{P}_A$, which implies the assertion in~\eqref{eq:nulliovd_forall_but_fin} above, according to Corollary~\ref{cor:degnuell=dp}.

Thus, Equations~\eqref{eq:easy_rep_inta} and \eqref{eq:nulliovd_forall_but_fin} imply
\begin{align}\label{eq:intermediate_rep_inta}
 \IntA = \mu_AK[X] + D[X] + \sum_{p\in\mathcal{P}_A} \sum_{\ell\geq 1} \frac{1}{p^{\ell}}\, \NullIovD{p^{\ell}}(A).
\end{align}
Further, by Corollary~\ref{cor:mlargeenough}, for all prime elements $p\in \mathcal{P}_A$, there exists $m_p\in \N$ such that for all $\ell\geq m_p$
\begin{align*}
  \NullIovD{p^{\ell}}(A) = \mu_AD[X] + p^{\ell-m_p}\NullIovD{p^{m_p}}(A)
\end{align*}
holds, and we can restrict the inner sum in Equation~\eqref{eq:intermediate_rep_inta} to all $1\leq \ell\leq m_p$.
And finally, since $p\NullIovD{p^{\ell-1}}(A) \subseteq \NullIovD{p^{\ell}}(A)$, it follows hat $\frac{1}{p^{\ell-1}}\NullIovD{p^{\ell-1}}(A) \subseteq \frac{1}{p^{\ell}}\NullIovD{p^{\ell}}(A)$. Hence
\begin{align*}
 \sum_{\ell=1}^{m_p} \frac{1}{p^{\ell}}\NullIovD{p^{\ell}}(A) = \frac{1}{p^{m_p}}\NullIovD{p^{m_p}}(A).
\end{align*}
Then, Theorem~\ref{thm:MinGenNullI} implies 
\begin{align*}
 \IntA = \mu_AK[X] + D[X] + \sum_{p\in\mathcal{P}_A} \sum_{j\in \IndexSet{A}{(p,m_p)}} \frac{\nu_{(p,j)}}{p^j}D[X].
\end{align*}
\end{proof}
%  -------------------------------------------------------------------------------------------------------------------------
%  -------------------------------------------------------------------------------------------------------------------------

%  -------------------------------------------------------------------------------------------------------------------------
%  -------------------------------------------------------------------------------------------------------------------------
\begin{Cor}
Let $D$ be a principal ideal domain and $A\in \Matrn{D}$ with minimal polynomial $\mu_A\in D[X]$.  
Then there exists a finite set $\mathcal{P}_A\subset \Primes$ and natural numbers  $m_p\in \N$ for $p\in \mathcal{P}_A$ such that 
\begin{align*}
 \IntImA = D[A] + \sum_{p\in\mathcal{P}_A} \sum_{j\in \IndexSet{A}{(p,m_p)}} \frac{\nu_{(p,j)}(A)}{p^j}D[A]
\end{align*}
where $\nu_{(p,j)}\in D[X]$ are $(p^j)$-minimal polynomial of $A$ for $j\geq 0$, and \IndexSet{A}{(p,m_p)} is the $m_p$-th index set of $A$ with respect to~the prime $p$. 
\end{Cor}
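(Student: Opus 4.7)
My plan is to derive this corollary as an immediate consequence of Theorem~\ref{thm:intvalA_desc} by applying the evaluation homomorphism $\mathrm{ev}_A\colon K[X]\to \Matrn{K}$, $f\mapsto f(A)$. By definition, $\IntImA = \mathrm{ev}_A(\IntA)$, so once we expand $\IntA$ according to Theorem~\ref{thm:intvalA_desc} and note that $\mathrm{ev}_A$ is $D$-linear (in fact $K$-linear), it suffices to evaluate each summand separately and collect the images.

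Concretely, I would take the representation
\begin{align*}
 \IntA = \mu_AK[X] + D[X] + \sum_{p\in\mathcal{P}_A} \sum_{j\in \IndexSet{A}{(p,m_p)}} \frac{\nu_{(p,j)}}{p^j}D[X]
\end{align*}
from Theorem~\ref{thm:intvalA_desc} and evaluate termwise. The first summand vanishes under $\mathrm{ev}_A$ because $\mu_A(A)=0$, so $(h\mu_A)(A) = h(A)\mu_A(A) = 0$ for every $h\in K[X]$. The second summand $D[X]$ yields $D[A]$ by definition. For the inner summands, an arbitrary element $\frac{\nu_{(p,j)}}{p^j}\,g$ with $g\in D[X]$ maps under $\mathrm{ev}_A$ to $\frac{\nu_{(p,j)}(A)}{p^j}\,g(A)$, since scalars in $K$ commute with matrix multiplication; as $g$ ranges over $D[X]$, $g(A)$ ranges over $D[A]$, so this produces exactly $\frac{\nu_{(p,j)}(A)}{p^j}D[A]$.

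Summing the images gives the inclusion ``$\subseteq$''. The reverse inclusion is also immediate: each generator $\frac{\nu_{(p,j)}(A)}{p^j}\cdot g(A)$ on the right-hand side is the image under $\mathrm{ev}_A$ of the polynomial $\frac{\nu_{(p,j)}}{p^j}g\in \IntA$, which lies in $\IntA$ by Theorem~\ref{thm:intvalA_desc}, and likewise every element of $D[A]$ is $g(A)$ for some $g\in D[X]\subseteq \IntA$.

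There is essentially no obstacle here; the result is a direct evaluation of Theorem~\ref{thm:intvalA_desc}. The only small points to observe are that the sums remain finite (we use the same $\mathcal{P}_A$ and the same exponents $m_p$ as in Theorem~\ref{thm:intvalA_desc}) and that the $K$-linear combinations involving $\mu_A K[X]$ collapse to the zero module after evaluation, so no $K$-denominators beyond the $p^j$ with $p\in\mathcal{P}_A$ survive in $\IntImA$.
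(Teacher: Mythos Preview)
Your proposal is correct and matches the paper's intent: the corollary is stated there without proof, as an immediate consequence of Theorem~\ref{thm:intvalA_desc} obtained by applying the evaluation map $f\mapsto f(A)$ to each summand. Your termwise analysis (the $\mu_A K[X]$ summand vanishing, $D[X]$ yielding $D[A]$, and each $\tfrac{\nu_{(p,j)}}{p^j}D[X]$ yielding $\tfrac{\nu_{(p,j)}(A)}{p^j}D[A]$) is exactly the intended argument.
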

%  -------------------------------------------------------------------------------------------------------------------------

\begin{Exa}
We continue Example~\ref{ex:genset}, and determine the rings $\Int{A,\Matr{3}{\Z}}$ of integer-valued polynomials on $A$ and $\IntIm(A,\Matr{3}{\Z})$ of integer-valued images for 
\begin{align*}
 A = %
   \begin{pmatrix}
       4 &    0  & 0  \\
       0  &  16  & 0  \\
       0  &   0  & 32 \\
   \end{pmatrix}
\in \Matr{3}{\Z}.
\end{align*}
We know that 
\begin{align*}
 \Int{A,\Matr{3}{\Z}} = \sum_{p\in \Primes}\sum_{\ell\in\N} \frac{1}{p^{\ell}}\, \NullIovD{p^{\ell}}(A).
\end{align*}
We can use the data of Example~\ref{ex:genset} in order to conclude that 
\begin{align*}
 \Int{A,\Matr{3}{\Z}} &= \mu_A\Q[X] + \Z[X]+ \frac{1}{3}\,\NullIovD{3}(A) + \frac{1}{7}\,\NullIovD{7}(A) + \frac{1}{64}\,\NullIovD{64}(A)\\
&= \mu_A\Q[X] + \Z[X]+ \sum_{p\in \{2,3,7\}} \frac{1}{p^{m_p}}\; \NullIovD{p^{m_p}}(A) 
\end{align*}
where $m_2 = 6$ and $m_3=m_7=1$. Similarly to the computation in Example~\ref{ex:genset} it follows that the $\{0,1,\ell\}$ is the $\ell$-th index set of $A$ with respect to~7 (for $\ell\geq 1$) and $(X-4)(X-16)$ is a $(7)$-minimal polynomial of $A$ (since $4,16,32,\ldots$ is a $7$-ordering of $\{4,16,32\}$, cf.~Example~\ref{ex:genset}).
Hence
\begin{align*}
 \Int{A,\Matr{3}{\Z}} = &(X-4)(X-16)(X-32)\Q[X] + \Z[X] \\
                        &+ \frac{1}{3}(X-4)(X-32)\Z[X] + \frac{1}{7}(X-4)(X-16)\Z[X] \\
                        &+ \frac{1}{64}(X-4)(X-16)\Z[X] + \frac{1}{4}(X-4)\Z[X].
\end{align*}
And finally, this implies
\begin{align*}
 \Int{A,\Matr{3}{\Z}}(A) = \Z[A] &+%
   \begin{pmatrix}
       0 &    0  & 0  \\
       0  &  -64  & 0  \\
       0  &   0  & 0 \\
   \end{pmatrix}
   \Z[A]+%
   \begin{pmatrix}
       0 &   0  & 0  \\
       0  &  0  & 0  \\
       0  &  0  & 64 \\
   \end{pmatrix}
   \Z[A]\\ &+%
   \begin{pmatrix}
       0 &  0  & 0  \\
       0 &  0  & 0  \\
       0 &  0  & 7 \\
   \end{pmatrix}
   \Z[A]+%
   \begin{pmatrix}
       0 &   0  & 0  \\
       0 &   3  & 0  \\
       0 &   0  & 7 \\
   \end{pmatrix}
   \Z[A].
\end{align*}

\end{Exa}

\bibliographystyle{abbrv}
\bibliography{biblio}
% \newrefcontext[sorting=nyt]
%\printbibliography
\end{document}